\documentclass{amsart}

\usepackage{amsmath,amsthm,amssymb}
\usepackage{graphicx}
\usepackage{tikz-cd}
\usepackage{paralist}
\usepackage{environ}
\usepackage{xcolor}
\usepackage{hyperref}
\usepackage{centernot}

\usepackage{marvosym}

\usepackage[framemethod=TikZ]{mdframed}

\usepackage{xcolor,cancel}

\usepackage{booktabs}
\usepackage{tabularx}
\usepackage{multirow,bigstrut}
\usepackage{bigstrut}
\usepackage{CJKutf8}


\makeatletter
\DeclareFontFamily{U}{tipa}{}
\DeclareFontShape{U}{tipa}{bx}{n}{<->tipabx10}{}
\newcommand{\arc@char}{{\usefont{U}{tipa}{bx}{n}\symbol{62}}}%

\newcommand{\arc}[1]{\mathpalette\arc@arc{#1}}

\newcommand{\arc@arc}[2]{%
  \sbox0{$\m@th#1#2$}%
  \vbox{
    \hbox{\resizebox{\wd0}{\height}{\arc@char}}
    \nointerlineskip
    \box0
  }%
}
\makeatother

\makeatletter
\newcommand{\doublewedge}{\big@doubleop{\wedge}}
\newcommand{\big@doubleop}[1]{%
  \DOTSB\mathop{\mathpalette\big@doubleop@aux{#1}}\slimits@
}

\newcommand\big@doubleop@aux[2]{%
  \sbox\z@{$\m@th#1#2$}%
  \makebox[1.35\wd\z@][s]{$\m@th#1#2\hss#2$}%
}

\newcommand{\abs}[1]{\left|#1\right|}     


\newcommand{\Int}{\mbox{int}}
\newcommand{\bdy}{\mbox{bdy}}

\newcommand{\Nrv}{\mbox{Nrv}}
\newcommand{\assign}{\mathrel{\mathop :}=}

\newcommand{\sh}{\mbox{sh}}

\newcommand{\cyc}{\mbox{cyc}}
\newcommand{\hcyc}{\mbox{hCyc}}



 %

\usepackage{pstricks,pst-text,pst-grad,pst-node,pst-3dplot,pstricks-add,pst-poly,pst-coil,pst-bar,pst-all,pst-plot,pst-2dplot,pst-3dplot,pst-solides3d} 
\usepackage{pst-fun,pst-blur}
\usepackage{pst-all}
\usepackage{auto-pst-pdf}

\usepackage{subfigure}
\renewcommand{\thesubfigure}{\thefigure.\arabic{subfigure}}
\makeatletter
\renewcommand{\p@subfigure}{}
\renewcommand{\@thesubfigure}{\thesubfigure:\hskip\subfiglabelskip}
\makeatother

\theoremstyle{plain}
\newtheorem{theorem}{Theorem}
\newtheorem{lemma}{Lemma}
\newtheorem{proposition}{Proposition}
\newtheorem{remark}{Remark}
\newtheorem{definition}{Definition}
\newtheorem{conjecture}{Conjecture}

\newtheorem{example}{Example}

\usepackage[title]{appendix}

\begin{document}

\title[Path Triangulation]{Path Triangulation, Cycles and Good Covers on Planar Cell Complexes.\\ 
Extension of J.H.C. Whitehead's Homotopy System Geometric Realization and E.C. Zeeman's Collapsible Cone Theorems.}

\author[James F. Peters]{James F. Peters}
\address{
Computational Intelligence Laboratory,
University of Manitoba, WPG, MB, R3T 5V6, Canada and
Department of Mathematics, Faculty of Arts and Sciences, Ad\.{i}yaman University, 02040 Ad\.{i}yaman, Turkey
}
\email{james.peters3@umanitoba.ca}
\thanks{The research has been supported by the Natural Sciences \&
Engineering Research Council of Canada (NSERC) discovery grant 185986 
and Instituto Nazionale di Alta Matematica (INdAM) Francesco Severi, Gruppo Nazionale per le Strutture Algebriche, Geometriche e Loro Applicazioni grant 9 920160 000362, n.prot U 2016/000036 and Scientific and Technological Research Council of Turkey (T\"{U}B\.{I}TAK) Scientific Human
Resources Development (BIDEB) under grant no: 2221-1059B211301223.}

\subjclass[2010]{14F35 (Homotopy theory), 32B25 (Triangulation), 57M05 (Fundamental group, presentations)}

\date{}
\dedicatory{Dedicated to J.H.C. Whitehead \& S.V. Banavar}

\begin{abstract}
This paper introduces path triangulation of points in a bounded, simply connected surface region, replacing ordinary triangles in a Delaunay triangulation with path triangles from homotopy theory.  A {\bf path triangle} has a border that is a sequence of paths $h:I\to X, I=[0,1]$. The main results in this paper are that (1) a cone $D\times I$ collapses to a path triangle $h\bigtriangleup K$, extending E.C. Zeeman's dunce hat cone triangle collapse theorem, (2) an ordinary path triangle with geometrically realized straight edges generalizes Veech's billiard triangle, (3) a billiard ball $K\times I$ collapses to a round path triangle geometrically realized as a triangle with curviliear edges, (4) a geometrically realized homotopy system defined in terms of free group presentations of path triangulations of finite cell complexes extends J.H.C. Whitehead's homotopy system geometric realization theorem and (5) every path triangulation of a cell complex is a good cover.
\end{abstract}

\keywords{Billiard Triangle, Free Group Presentation, Good Cover, Homotopy, Path, Path Triangle, Path Triangulation, Zeeman Dunce Hat}

\maketitle
\tableofcontents


\section{Introduction}
In this paper, the conventional Delaunay triangulation of a set of points is replaced by a path triangulation of distinguished points $P$ on a bounded, simply connected surface $S$, resulting in a collection of path triangles geometrically realized as curvilinear triangles that provide a good cover of the surface. This form of triangulation results in a collection of path-connected points in overlapping path cycles. 

A {\bf path} $h:[0,1]\to S$ is a continuous mapping from the unit interval to a set of points $P$ in a bounded, simply connected surface $S$.  The surface $S$ is {\bf simply connected}, provided every path $h$ has end points $h(0),h(1)\in P$ and $h$ maps to no self-loops.

Paths either lie entirely on a surface boundary in the planar case or puncture a surface in the non-planar case.  Paths that puncture a surface are called cross-cuts.  A {\bf cross cut path} (also called an {\bf ideal arc}~\cite[\S 3, p.11]{Mosher1988surface}) has both ends in $P$ and path interior in the interior of $S$. A pair of points $p,q$ in $P$ are called {\bf orphan points}, provided there is no path $h$ with endpoints $h(0) = p$ and $h(1) = q$.  Surface cross cuts are useful in collapsing 3D shapes such as cones and spheres to 2D triangles.  A surface {\bf cross section} is a collection of cross-cuts.   

Two forms of path triangles are possible: (1) {\bf surface path triangle} with edges and interior entirely on the surface boundary or (2) {\bf cross-cut path triangle} with edges on the surface boundary and with triangle interior in the interior of the surface.

A path triangulation of a bounded surface $S$ (denoted by $h\bigtriangleup^n S$) is geometrically realized as a collection of 1-cycles on sequences of triangle edges. A {\bf 1-cycle} is a sequence of edges with pairwise shared vertexes with no end vertex, geometrically realized as a simple closed curve. 
Let $p,q\in h\bigtriangleup^n E$ for a selected vertex $p$ (called a generator) and any other vertex $q$. A move operation $+(p,q)\to kp$ results from traversing the $k$ edges from $p$ to reach $q$, leading to a free group presentation of $E$.  \
Free group presentations of path triangulations result in an extension of the homotopy geometric realization theorem introduced by J.H.C. Whitehead~\cite{Whitehead1949BAMS-CWtopology,Whitehead1949BAMS-CWtopology-II}, namely,\\
\vspace{3mm}

{\bf Theorem}~\cite[Theorem 2,\S 6]{Whitehead1949BAMS-CWtopology-II}.
A given homotopy system, $\rho$, has a geometric realization, if dim$\leq 4$.\\
\vspace{3mm}

In its simplest form, a free group presentation of a triangulation $\rho$ is a free group, which is realized geometrically as a collection of path-connected 1-cycles (see Theorem~\ref{thm:pathHsysRealization}).  


Path triangulation leads to the geometric realization of new forms of 2-cells (triangles)~\cite{PetersVergili2021homotopicCycles}, homotopic nerves~\cite{Peters2021KievConf} and good covers~\cite{PetersVergili2021goodCovers} on a planar triangulated cell complex.

Different forms of path triangles result from an extension of the Zeeman's collapsible dunce hat theorem~\cite{Zeeman1963dunceHat}: A dunce hat cone $D\times I$ is collapsible to a triangle (see Theorem~\ref{thm:dunceHat} and Lemma~\ref{lemma:collapses}).  A cell complex $K$ collapses to a subcomplex $L\subset K$, provided there is a finite sequence of elementary collapses starting with $K$ and ending with $L$ (denoted by $K\searrow L$)~\cite[p.342]{Zeeman1963dunceHat}  Here is Zeeman's conjecture (1) (still open):
If $K$ is a contractible 2-complex, then $K\times I$ is collapsible,{\em i.e.}, $K$ collapses to a single vertex $v$ in $K$ via a sequences of elementary collapses $K_i$:
\[
K = K_0\searrow K_1\searrow K_i\searrow\cdots\searrow K_n = v.
\]
For a recent study of Zeeman's conjecture (1), see A. Kupers~\cite[\S 2, p. 37]{Kupers2021Zeeman}.

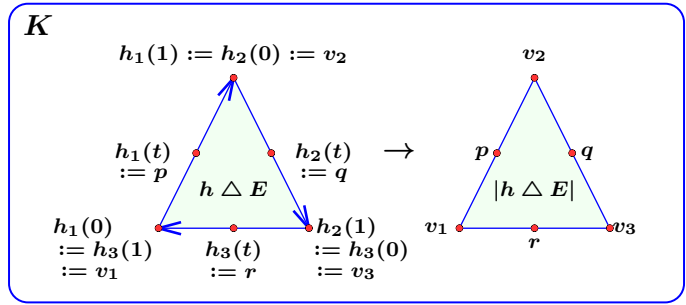
\begin{figure}[!ht]
\begin{pspicture}
(-3.0,-1.0)(6.5,4.5)
\centering
\psframe[linewidth=0.75pt,linearc=0.25,cornersize=absolute,linecolor=blue](-2.0,-1.0)(7.0,3.0)%
\psline*[linecolor=green!5]%
(0.0,0.0)(1.0,2.0)(2.0,0.0)(0.0,0.0)
\psline[linewidth=0.5pt,linecolor=blue,arrowscale=1.0]{-v}%
(0.0,0.0)(1.0,2.0)
\psline[linewidth=0.5pt,linecolor=blue,arrowscale=1.0]{-v}%
(1.0,2.0)(2.0,0.0)
\psline[linewidth=0.5pt,linecolor=blue,arrowscale=1.0]{-v}%
(2.0,0.0)(0.0,0.0)
\psdots[dotstyle=o,dotsize=2.8pt,linewidth=1.2pt,linecolor=black,fillcolor=red!80]%
(0.0,0.0)(0.5,1.0)(1.0,0.0)(1.0,2.0)(1.5,1.0)(2.0,0.0)
\rput(1.0,2.3){\footnotesize $\boldsymbol{h_1(1)\assign h_2(0)\assign v_2}$}
\rput(-0.2,1.0){\footnotesize  $\boldsymbol{h_1(t)}$}
\rput(-0.2,0.7){\footnotesize  $\boldsymbol{\assign p}$}
\rput(2.2,1.0){\footnotesize  $\boldsymbol{h_2(t)}$}
\rput(2.2,0.7){\footnotesize  $\boldsymbol{\assign q}$}
\rput(2.5,0.0){\footnotesize  $\boldsymbol{h_2(1)}$}
\rput(2.7,-0.3){\footnotesize  $\boldsymbol{\assign h_3(0)}$}
\rput(2.5,-0.6){\footnotesize  $\boldsymbol{\assign v_3}$}
\rput(-1.0,0.0){\footnotesize  $\boldsymbol{h_1(0)}$}
\rput(-0.7,-0.3){\footnotesize  $\boldsymbol{\assign h_3(1)}$}
\rput(-0.9,-0.6){\footnotesize  $\boldsymbol{\assign v_1}$}
\rput(1.0,-0.3){\footnotesize  $\boldsymbol{h_3(t)}$}
\rput(1.0,-0.6){\footnotesize  $\boldsymbol{\assign r}$}
\rput(1.0,0.5){\footnotesize  $\boldsymbol{h\bigtriangleup E}$}
\rput(3.2,1.0){\large  $\boldsymbol{\to}$}
\psline*[linecolor=green!5]%
(4.0,0.0)(5.0,2.0)(6.0,0.0)(4.0,0.0)
\psline[linewidth=0.5pt,linecolor=blue]%
(4.0,0.0)(5.0,2.0)
\psline[linewidth=0.5pt,linecolor=blue]%
(5.0,2.0)(6.0,0.0)
\psline[linewidth=0.5pt,linecolor=blue]%
(6.0,0.0)(4.0,0.0)
\psdots[dotstyle=o,dotsize=2.8pt,linewidth=1.2pt,linecolor=black,fillcolor=red!80]%
(4.0,0.0)(4.5,1.0)(5.0,2.0)(5.0,0.0)(5.5,1.0)(6.0,0.0)
\rput(3.7,0.0){\footnotesize  $\boldsymbol{v_1}$}
\rput(4.3,1.0){\footnotesize  $\boldsymbol{p}$}
\rput(5.0,2.3){\footnotesize  $\boldsymbol{v_2}$}
\rput(5.7,1.0){\footnotesize  $\boldsymbol{q}$}
\rput(6.2,0.0){\footnotesize $\boldsymbol{v_3}$}
\rput(5.0,-0.2){\footnotesize  $\boldsymbol{r}$}
\rput(-1.6,2.7){\large  $\boldsymbol{K}$}
\rput(5.0,0.5){\footnotesize  $\boldsymbol{\abs{h\bigtriangleup E}}$}
\end{pspicture}
\caption[]{Three Paths that Construct the Sides of a Path Triangle $\boldsymbol{h\bigtriangleup E}$ in a CW space $\boldsymbol{K}$, leading to its geometric realization $\boldsymbol{\abs{h\bigtriangleup E}}$ as a 2-cell (triangle)}
\label{fig:htriangle}
\end{figure}

\begin{figure}[!ht]
\centering
\subfigure[cell complex $K$]
{\label{fig:bornology}
\begin{pspicture}
(-0.0,-0.5)(3.5,3.0)
\psframe[linewidth=0.75pt,linearc=0.25,cornersize=absolute,linecolor=blue](-0.5,-0.5)(5,3.5)
\psdots[dotstyle=o,dotsize=2.8pt,linewidth=1.2pt,linecolor=black,fillcolor=blue!80]
(0.5,1.5)(1.0,2.0)(1.5,2.2)(3.0,1.8)(4.0,2.2)%
(3.5,1.6)(3.0,1.4)(1.0,1.0)(2.0,0.75)(4.0,1.2)%
\psdots[dotstyle=o,dotsize=2.8pt,linewidth=1.2pt,linecolor=black,fillcolor=red!80]
(1.2,1.5)(2.5,1.5)
\rput(1.4,1.6){\footnotesize $\boldsymbol{p}$}
\rput(2.5,1.7){\footnotesize $\boldsymbol{q}$}
\rput(0.3,1.5){\footnotesize $\boldsymbol{v}$}
\end{pspicture}}
\hfil\qquad\qquad
\subfigure[Path-triangulated cell complex $h\bigtriangleup^n K$]
 {\label{fig:bornology0}
\begin{pspicture}
(-0.0,-0.5)(3.5,3.0)
\psframe[linewidth=0.75pt,linearc=0.25,cornersize=absolute,linecolor=blue](-0.5,-0.5)(5,3.5)
\psdots[dotstyle=o,dotsize=2.8pt,linewidth=1.2pt,linecolor=black,fillcolor=blue!80]
(0.5,1.5)(1.0,2.0)(1.5,2.2)(3.0,1.8)(4.0,2.2)%
(3.5,1.6)(3.0,1.4)(1.0,1.0)(2.0,0.75)(4.0,1.2)%
\psline*[linewidth=0.5pt,linecolor=green!20]
(0.5,1.5)(1.0,2.0)(1.5,2.2)(2.5,1.5)(2.0,0.75)(1.0,1.0)(0.5,1.5)
\psline*[linewidth=0.5pt,linecolor=blue!20]
(3.0,1.4)(3.0,1.8)(3.5,1.6)(3.0,1.4)
\psline*[linewidth=0.5pt,linecolor=blue!20]
(3.0,1.4)(3.5,1.6)(4.0,1.2)(3.0,1.4)
\psline*[linewidth=0.5pt,linecolor=blue!20]
(3.0,1.8)(4.0,2.2)(3.5,1.6)(3.0,1.8)
\rput(1.4,1.6){\footnotesize $\boldsymbol{p}$}
\rput(2.5,1.7){\footnotesize $\boldsymbol{q}$}
\rput(0.3,1.5){\footnotesize $\boldsymbol{v}$}
\rput(1.8,1.4){\footnotesize $\boldsymbol{h}$}
\rput(2.4,2.4){\tiny $\boldsymbol{nucleus}$}
\psline[border=0.2pt]{*->}%
(2.4,2.2)(1.45,1.7)
\psline[linewidth=0.5pt,linecolor=blue,arrowscale=0.5]{-v}%
(0.5,1.5)(1.0,2.0)
\psline[linewidth=0.5pt,linecolor=blue,arrowscale=0.5]{-v}%
(1.2,1.5)(0.5,1.5)
\psline[linewidth=0.5pt,linecolor=blue,arrowscale=0.5]{-v}%
(1.0,2.0)(1.5,2.2)
\psline[linewidth=0.5pt,linecolor=blue,arrowscale=0.5]{-v}%
(1.2,1.5)(1.0,2.0)
\psline[linewidth=0.5pt,linecolor=blue,arrowscale=0.5]{-v}%
(1.2,1.5)(2.0,0.75)
\psline[linewidth=0.5pt,linecolor=blue,arrowscale=0.5]{-v}%
(1.5,2.2)(1.2,1.5)
\psline[linewidth=0.5pt,linecolor=blue,arrowscale=0.5]{-v}%
(2.0,0.75)(2.5,1.5)
\psline[linewidth=0.5pt,linecolor=blue,arrowscale=0.5]{-v}%
(1.5,2.2)(3.0,1.8)
\psline[linewidth=0.5pt,linecolor=blue,arrowscale=0.5]{-v}%
(3.0,1.8)(2.5,1.5)
\psline[linewidth=0.5pt,linecolor=blue,arrowscale=0.5]{-v}%
(2.5,1.5)(3.0,1.4)
\psline[linewidth=0.5pt,linecolor=blue,arrowscale=0.5]{v-v}%
(3.0,1.4)(3.0,1.8)
\psline[linewidth=0.5pt,linecolor=blue,arrowscale=0.5]{-v}%
(3.0,1.4)(3.5,1.6)
\psline[linewidth=0.5pt,linecolor=blue,arrowscale=0.5]{-v}%
(3.5,1.6)(3.0,1.8)
\psline[linewidth=0.5pt,linecolor=blue,arrowscale=0.5]{-v}%
(2.5,1.5)(1.5,2.2)
\psline[linewidth=0.5pt,linecolor=blue,arrowscale=0.5]{v-v}%
(1.2,1.5)(2.5,1.5)
\psline[linewidth=0.5pt,linecolor=blue,arrowscale=0.5]{-v}%
(3.0,1.8)(4.0,2.2)
\psline[linewidth=0.5pt,linecolor=blue,arrowscale=0.5]{-v}%
(3.0,1.8)(4.0,2.2)
\psline[linewidth=0.5pt,linecolor=blue,arrowscale=0.5]{-v}%
(4.0,2.2)(3.5,1.6)
\psline[linewidth=0.5pt,linecolor=blue,arrowscale=0.5]{-v}%
(3.5,1.6)(4.0,1.2)
\psline[linewidth=0.5pt,linecolor=blue,arrowscale=0.5]{-v}%
(4.0,1.2)(3.0,1.4)
\psline[linewidth=0.5pt,linecolor=blue,arrowscale=0.5]{-v}%
(3.0,1.4)(2.0,0.75)
\psline[linewidth=0.5pt,linecolor=blue,arrowscale=0.5]{-v}%
(2.0,0.75)(1.0,1.0)
\psline[linewidth=0.5pt,linecolor=blue,arrowscale=0.5]{-v}%
(0.5,1.5)(1.0,1.0)
\psline[linewidth=0.5pt,linecolor=blue,arrowscale=0.5]{-v}%
(1.0,1.0)(1.2,1.5)
\psdots[dotstyle=o,dotsize=2.8pt,linewidth=1.2pt,linecolor=black,fillcolor=red!80]
(1.2,1.5)
\psdots[dotstyle=o,dotsize=2.8pt,linewidth=1.2pt,linecolor=black,fillcolor=blue!80]
(2.5,1.5)
\end{pspicture}}
\hfil
\caption[]{Path-based triangulated Rotman-Vigolo Hawaiian fish cell complex with nucleus vertex $p\in\Int(h\bigtriangleup^n K)$}
\label{fig:HawaiianFish}
\end{figure}

\section{Preliminaries}
We assume that a bounded, simply connected surface $S$ is Hausdorf, {\em i.e.}, surface points $P$ in $S$ reside in disjoint neighborhoods.
A path triangulation $K$ of surface points (denoted by 
$h{\bigtriangleup}^n K$) maps to a collection of triangles that provide an alternative to Delaunay triangulation~
\cite{Delone1932geometry,Delone1934geometry}.  
The basic building block in $h{\bigtriangleup}^n K$ is a path triangle denoted by $h{\bigtriangleup} E$.  

\begin{definition}\label{def:path} {\rm \bf Path}.\\
Let $I = [0,1]$, the unit interval.  A {\bf path} in a space $(S,P)$ is a continuous map $h:I\to S$ with endpoints $h(0)=x_0, h(1)=x_1\in P$ and $h(t)\in S$ for $t\in I$ ~\cite[\S 2.1,p.11]{Switzer2002CWcomplex}.
The geometric realization of a path $h$ is denoted $\abs{h}$.
\qquad \textcolor{blue}{\Squaresteel}
\end{definition}



\begin{lemma}\label{lemma:path}
Every path has a geometric realization as an edge.
\end{lemma}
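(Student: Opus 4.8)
The plan is to read the edge structure of the realization directly off the domain of the path. First I would recall that the unit interval $I=[0,1]$ is itself the prototypical $1$-cell: it is the geometric realization of the abstract $1$-simplex, whose two $0$-faces are the endpoints $\{0\}$ and $\{1\}$ and whose single open $1$-cell is the interior $(0,1)$. Hence every continuous map out of $I$ is a candidate characteristic map for an edge, and the only thing left to verify is that the endpoints and the interior of $I$ land in the correct strata of $S$.

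Next I would invoke Definition~\ref{def:path}, which supplies exactly this data. A path $h\colon I\to S$ is continuous, its endpoints $h(0)=x_0$ and $h(1)=x_1$ lie in the distinguished set $P$ (the vertices, i.e. the $0$-skeleton), and $h(t)\in S$ for every interior parameter $t$. Writing $\abs{h}\assign h(I)$ and transporting the cell decomposition of $I$ across $h$, the boundary $\{0,1\}$ maps to the vertex pair $h(0),h(1)\in P$ while $(0,1)$ maps to the open $1$-cell $h\big((0,1)\big)$. This is precisely the data of a $1$-cell attached along the restriction of $h$ to $\{0,1\}$, so $\abs{h}$ inherits the structure of an edge with vertices $h(0)$ and $h(1)$. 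Non-degeneracy is free: because $S$ is simply connected in the sense that $h$ admits no self-loop, one has $h(0)\neq h(1)$, so the two vertices are distinct.

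The step I expect to be the main obstacle is reconciling the possibly curvilinear image $h(I)$ with the requirement that the realization be a genuine edge, that is, a $1$-cell homeomorphic to a segment: a merely continuous path may cross itself, so $h(I)$ need not be embedded. I would resolve this exactly as the passage from $h\bigtriangleup E$ to its realization $\abs{h\bigtriangleup E}$ in Figure~\ref{fig:htriangle} does, by taking the geometric realization to be the affine realization of the underlying $1$-simplex --- the rectilinear segment joining $h(0)$ to $h(1)$, parametrized by $t\mapsto (1-t)\,h(0)+t\,h(1)$ --- rather than the raw image of the curve. This parametrization is an embedding of $I$, its image is a single edge whose vertices $h(0),h(1)\in P$ are distinct by the no-self-loop hypothesis, and it agrees with $h$ on the boundary $\{0,1\}$. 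Either way, $\abs{h}$ is an edge, which completes the argument.
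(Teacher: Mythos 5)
Your argument is essentially the paper's: both proofs read the edge structure off the unit interval, taking the traced point set $\left\{h(t):t\in[0,1]\right\}$ with $h(0),h(1)\in P$ as the two vertices and $h(t)$, $t\in(0,1)$, as the edge interior, so that $\abs{h}=\arc{h(0)h(1)}$ follows directly from Definition~\ref{def:path}. The one point of divergence is your fallback to the affine chord $t\mapsto(1-t)\,h(0)+t\,h(1)$: the paper instead keeps the (possibly curvilinear) traced arc itself as the realized edge, which is what makes its recorded property $h(t)\in\Int(\abs{h})$ for $t\in(0,1)$ hold and is consistent with the curved edges admitted elsewhere (round and path-class triangles), whereas the straight chord would in general fail to contain the points $h(t)$.
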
 
\begin{proof}
From Def.~\ref{def:path}, a path $h:I\to X$ includes all points $h(t), t\in [0,1]$, {\em i.e.}, we have the closed set of points
\[
\left\{h(0),\dots,h(t),\dots,h(1)\right\},\ \mbox{for}\ 0\leq t\leq 1,
\]

	
which is geometrically realized as an edge $\abs{h} = \arc{h(0)h(1)}$ with $h(t)\in \Int(\abs{h}), t\in (0,1)$ (interior of edge $\abs{h}$).
\end{proof}

From Lemma~\ref{lemma:path}, the geometric realization of a path $h$ is a 1-cell (edge) $\abs{h}$ in a cell complex. In attaching paths together to construct a path triangle $E$ (see Def.~\ref{def:pathTriangle}), the end result is a path cycle with a geometric counterpart $\abs{E}$ that is a 1-cycle.

\begin{definition}\label{def:filledCycle} {\rm \bf 1-Cycle}.
In a CW space $K$~\cite{Whitehead1949BAMS-CWtopology}, a 1-cycle $E$ (denoted by $\cyc E$) in a CW space $K$ is a collection of path-connected vertexes (0-cells) on edges (1-cells) attached to each other with no end vertex.
\qquad\textcolor{blue}{\Squaresteel}
\end{definition} 
 
\begin{definition}\label{def:pathConn} {\bf Path-Connected}.\\
A pair of 0-cells $v,v'$ in a cell complex $K$ is {\bf path-connected}, provided there is a sequence of paths $h_1,\dots,h_k$, starting with $h_1(0)\assign v$ and ending with $h_{k-1}(1) = h_k(0)\assign v'$.
\qquad \textcolor{blue}{\Squaresteel}
\end{definition}

In defining a path triangle, we consider both the bounding edge as well as the interior of the triangle.
Consider the contour (bounding edge) of a planar cell complex shape $\sh E$ (denoted by $\bdy(\sh E)$) in a CW complex. 


\begin{definition}\label{def:pathTriangle}{\bf Path Triangle}.\\
A {\bf path triangle} $h\bigtriangleup abc$ with a boundary $
\bdy(h\bigtriangleup abc)$ is a sequence of three overlapping paths with no end path in a space $X$.
Each path $h\in h\bigtriangleup abc$ maps to an edge $\ell\in X$. 


Each edge $\ell$ is an edge perpendicular to base $\arc{bc}$, with an endpoint on either $\arc{ab}$ or on $\arc{ac}$  and $\ell\cap \bdy(h\bigtriangleup abc)\neq \emptyset$.  Let the interior $\Int (h\bigtriangleup abc)$ be defined by the homotopy $f:X\times I\to Y$, {\em i.e.},
\begin{align*}
X &= \left\{\ell_i\in h\bigtriangleup abc: \ell_i\perp \arc{bc}\ \mbox{and}\ \ell_i\cap\bdy(h\bigtriangleup E)\neq \emptyset\right\}.\\
Y &= h\bigtriangleup abc.\\
f(\ell_t) &= \ell_t\cap \bdy(h\bigtriangleup abc), t\in I.\\
          &=\mbox{set of boundary points on $h\bigtriangleup abc$ cut by $\ell_t$}.\\
\Int(h\bigtriangleup abc) &= h\bigtriangleup abc\setminus f(\ell_t),\ \mbox{for all}\ t\in I.\\
h\bigtriangleup abc &= \bdy(h\bigtriangleup abc)\cup \Int(h\bigtriangleup abc).
\end{align*}
In the plane, the geometric realization of $h\bigtriangleup E$ is a 2-cell (filled triangle) denoted by $\abs{h\bigtriangleup E}$.
\qquad \textcolor{blue}{\Squaresteel}
\end{definition}

\begin{remark}
For example, a path triangle with an empty interior will result from collapsing a hollow cone\footnote{Many thanks to Tane Vergili for pointing this out} such as the one in Fig.~\ref{fig:duncehatTriangle}. 
\qquad \textcolor{blue}{\Squaresteel}
\end{remark}

\begin{figure}[!ht]
\begin{pspicture}
(-1.0,0.0)(5.0,4.0)
\centering
\psline[linestyle=solid]%
		(0,1)(2,3)(4,1)(0,1)
\psline[linecap=0,linecolor=red](0,1)(2,3)(4,1)
\psline[linewidth=0.5pt,linecolor=black,arrowscale=1.0]{-v}%
(2.0,3.0)(1.0,2.0)
\psline[linewidth=0.5pt,linecolor=black]%
(1.0,2.0)(0.0,1.0)
\psline[linewidth=0.5pt,linecolor=black,arrowscale=1.0]{-v}%
(2.0,3.0)(3.0,2.0)
\psline[linewidth=0.5pt,linecolor=black]%
(3.0,2.0)(4.0,1.0)
\psline[linewidth=0.5pt,linecolor=black,arrowscale=1.0]{-v}%
(0.0,1.0)(2.0,1.0)
\psline[linewidth=0.5pt,linecolor=black]%
(2.0,1.0)(4.0,1.0)
\psdots[dotstyle=o,dotsize=2.8pt,linewidth=1.2pt,linecolor=black,fillcolor=red!80]%
(0,1)(2,3)(4,1)
\rput(-0.3,1.0){\footnotesize $\boldsymbol{b}$}
\rput(2.1,3.2){\footnotesize $\boldsymbol{a}$}
\rput(4.3,1.0){\footnotesize $\boldsymbol{c}$}
\rput(2.0,2.0){\large $\boldsymbol{\bigtriangleup E}$}
\end{pspicture}
\begin{pspicture}
(-1.0,0.0)(5.0,4.0)
    \psline[linewidth=0.5pt,linecolor=black,arrowscale=1.0]{-v}%
(2.1,0.4)(2.0,0.4)
    \psline[linestyle=solid]%
		(0,1)(2,3)(4,1)
		\psellipticarc[linewidth=0.5pt,linestyle=dashed](2,1)(2,.65){0}{180}
    \psellipticarcn(2,1)(2,.65){0}{180}
    \psline[linecap=0,linecolor=red](0,1)(2,3)(4,1)
		\psline[linestyle=dashed,linecolor=red](0,1)(4,1)
		\psline[linestyle=dotted,linecolor=black,linewidth=1.8pt]%
		(2,1)(2,3)
\psline[linestyle=dotted,dotsize=2.2pt,linewidth=2.2pt,linecolor=blue]%
(3,2)(3,1)
\psdots[dotstyle=o,dotsize=2.9pt,linewidth=1.2pt,linecolor=black,fillcolor=red!90]%
(3,2)(3,1)
\rput(3.4,2.0){\footnotesize $\boldsymbol{\assign p}$}
\rput(2.8,1.2){\footnotesize $\boldsymbol{q}$}
\rput(2.8,0.8){\footnotesize $\boldsymbol{\ell_i(1)}$}
\psline[border=0.2pt]{*->}%
(4.0,1.8)(3.0,1.3)
\rput(4.3,1.9){\footnotesize $\boldsymbol{\ell_i}$}
\rput(3.3,2.3){\footnotesize $\boldsymbol{\ell_i(0)}$}
\psdots[dotstyle=o,dotsize=2.8pt,linewidth=1.2pt,linecolor=black,fillcolor=black!80]%
(0,1)(2,3)(4,1)
\rput(2,1){\psline(0,9pt)(9pt,9pt)(9pt,0)}
\rput(2.0,3.2){\footnotesize $\boldsymbol{a}$}
\rput(-0.3,1.0){\footnotesize $\boldsymbol{b}$}
\rput(4.3,1.0){\footnotesize $\boldsymbol{c}$}
\rput(1.8,2.0){\large $\boldsymbol{D}$}
\rput(-0.9,2.3){\large $\boldsymbol{\swarrow D\times I}$}
\end{pspicture}
\caption[]{Zeeman Dunce Hat (cone) $\boldsymbol{D\times I}$ collapses to a 2-cell $\boldsymbol{\bigtriangleup E}$ with path $\boldsymbol{\ell_i:I\to D}$ piercing $\boldsymbol{D}$ at $\boldsymbol{\ell_i(0)\assign p}$ and $\boldsymbol{\ell_i(1)\assign q}$.}
\label{fig:duncehatTriangle}
\end{figure}

\begin{figure}[!ht]
\begin{pspicture}
(-1.0,0.0)(5.0,4.0)
\centering
\psline*[linestyle=solid,linecolor=green!5]%
		(0,1)(2,3)(4,1)(0,1)
\psline[linecap=0,linecolor=red](0,1)(2,3)(4,1)
\psline[linewidth=0.5pt,linecolor=black,arrowscale=1.0]{-v}%
(0.0,1.0)(2.0,3.0)
\psline[linewidth=0.5pt,linecolor=black,arrowscale=1.0]{-v}%
(2.0,3.0)(4.0,1.0)
\psline[linewidth=0.5pt,linecolor=black,arrowscale=1.0]{-v}%
(4.0,1.0)(0.0,1.0)
\psdots[dotstyle=o,dotsize=2.8pt,linewidth=1.2pt,linecolor=black,fillcolor=red!80]%
(0,1)(2,3)(4,1)
\rput(0.6,2.0){\footnotesize $\boldsymbol{h_1}$}
\rput(-0.5,1.0){\footnotesize $\boldsymbol{h_1(0)}$}
\rput(-0.3,0.7){\footnotesize $\boldsymbol{= h_3(1)}$} %
\rput(-0.5,0.4){\footnotesize $\boldsymbol{\assign v_1}$}
\rput(2.1,3.8){\footnotesize $\boldsymbol{h_1(1)}$}
\rput(2.2,3.5){\footnotesize $\boldsymbol{=h_2(0)}$}
\rput(2.1,3.2){\footnotesize $\boldsymbol{\assign v_2}$}
\rput(3.3,2.0){\footnotesize $\boldsymbol{h_2}$}
\rput(4.7,1.0){\footnotesize $\boldsymbol{h_3(0)}$}
\rput(4.7,0.7){\footnotesize $\boldsymbol{= h_2(1)}$} %
\rput(4.5,0.4){\footnotesize $\boldsymbol{\assign v_3}$} 
\rput(2.0,0.8){\footnotesize $\boldsymbol{h_3}$}
\rput(2.0,2.0){\large $\boldsymbol{h\bigtriangleup E}$}
\end{pspicture}
\begin{pspicture}
(-1.0,0.0)(5.0,4.0)
    \psline*[linestyle=solid,linecolor=green!5]%
		(0,1)(2,3)(4,1)(0,1)
		\psline[linewidth=0.5pt,linecolor=black,arrowscale=1.0]{-v}%
(2.1,0.4)(2.0,0.4)
		\psellipticarc[linestyle=dashed](2,1)(2,.65){0}{180}
    \psellipticarcn(2,1)(2,.65){0}{180}
		\psellipticarcn(2,1)(2,.65){0}{180}
    \psline[linecap=0,linecolor=red](0,1)(2,3)(4,1)
		\psline[linestyle=dashed,linecolor=red](0,1)(4,1)
		\psline[linestyle=dotted,linecolor=black,linewidth=1.2pt]%
		(2,1)(2,3)
\psdots[dotstyle=o,dotsize=2.8pt,linewidth=1.2pt,linecolor=black,fillcolor=black!80]%
(0,1)(2,3)(4,1)
\rput(2,1){\psline(0,9pt)(9pt,9pt)(9pt,0)}
\rput(-0.3,1.0){\footnotesize $\boldsymbol{v_1}$} 
\rput(2.0,3.2){\footnotesize $\boldsymbol{v_2}$} 
\rput(4.3,1.0){\footnotesize $\boldsymbol{v_3}$} 
\rput(1.8,2.0){\large $\boldsymbol{D}$}
\rput(1.8,2.0){\large $\boldsymbol{D}$}
\rput(-0.9,2.3){\large $\boldsymbol{\swarrow D\times I}$}
\end{pspicture}
\caption[]{Zeeman Dunce Hat (filled cone) cell complex $\boldsymbol{D\times I}$ collapses to a filled Path Triangle $\boldsymbol{h\bigtriangleup E}$ over the unit interval $\boldsymbol{I=[0,1]}$, {\em i.e.}, $\boldsymbol{D\times I \searrow h\bigtriangleup E}$, {\em i.e.}, $\boldsymbol{\mbox{cone}\ D\times I}$ collapses via a homotopy to a path 2-cell (triangle) $\boldsymbol{h\bigtriangleup E}$.}
\label{fig:duncePathTriangle}
\end{figure}
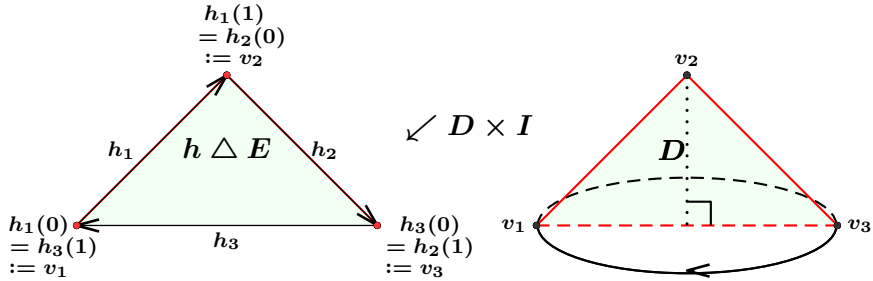

Let $D$ be a cone (a topological dunce hat~\cite{Zeeman1963dunceHat}). 

\begin{theorem}\label{thm:dunceHat}{\rm ~\cite[p.342]{Zeeman1963dunceHat}}.\\
$D\times I$ is collapsible.
\end{theorem}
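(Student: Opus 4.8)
The plan is to exhibit an explicit finite sequence of elementary collapses carrying the $3$-complex $D\times I$ down to a single vertex, thereby verifying the definition $K_0\searrow K_1\searrow\cdots\searrow v$ recalled in the introduction. First I would fix a CW decomposition of the dunce hat $D$ with one vertex $v$, one $1$-cell $a$ (a loop at $v$), and one $2$-cell $F$ attached along the word $a\,a\,a^{-1}$. Giving $I$ its two endpoints $0,1$ and one edge, the product $D\times I$ then carries the cells $(F,I)$ (the unique $3$-cell), the $2$-cells $(F,0),(F,1),(a,I)$, the $1$-cells $(a,0),(a,1),(v,I)$, and the $0$-cells $(v,0),(v,1)$. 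The homotopy realizing the collapse is the straight-line retraction of the solid cone onto one of its faces, exactly as pictured in Fig.~\ref{fig:duncePathTriangle}.

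Next I would record \emph{why} $D$ by itself cannot be collapsed: the loop $a$ is traversed three times by $\partial F$, so no $1$-cell of $D$ is a free face and no elementary collapse of $D$ can even begin. The interval factor is precisely what removes this obstruction. The boundary copies $(F,0)$ and $(F,1)$ each become free faces of the single $3$-cell $(F,I)$, since there is no other $3$-cell for which they could be a face. Hence the first elementary collapse removes the pair $\bigl((F,0),(F,I)\bigr)$, which is the ``solid cone onto a face'' move of the figure, pushing the prism onto $D\times\{1\}$.

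The key step, and the one I expect to be the main obstacle, is to continue the collapse \emph{past} the copy of $D$ that a naive prism collapse would leave behind. The plain sequence ``collapse the top, then collapse the vertical annulus $(a,I)$ onto the bottom circle $(a,0)$'' gets stuck at $D\times\{1\}$, which is again non-collapsible, so the coarse product structure does not suffice. To avoid this I would pass to a subdivision fine enough that the three occurrences of $a$ in $\partial F$ can be peeled off one at a time, interleaving collapses in the $I$-direction with partial collapses of the subdivided cells $(F,\cdot)$; this is exactly Zeeman's construction on p.342 of~\cite{Zeeman1963dunceHat}, adapted cell-by-cell. The point is that each elementary \emph{expansion} that a simple-homotopy reduction of $D$ to a point would require is realized here as an honest \emph{sub-collapse} inside the extra dimension supplied by $I$, so no expansions are ever needed and the delicate bookkeeping is always ensuring that a free face remains available after each move.

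Finally, once the complex has been reduced to a collapsible $2$-cell, namely the filled path triangle $h\bigtriangleup E$ of Fig.~\ref{fig:duncePathTriangle} (a $2$-simplex), I would finish by the standard collapse of a triangle to an edge and then to its apex vertex, giving $D\times I\searrow v$. Packaging the individual elementary collapses via Lemma~\ref{lemma:collapses} then certifies that $D\times I$ is collapsible. Throughout, the geometric retraction of the cone guarantees that such a collapsing sequence exists, while the subdivision guarantees that each step is a legitimate elementary collapse.
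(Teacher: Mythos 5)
You are attempting to prove a statement that the paper does not prove at all: Theorem~\ref{thm:dunceHat} is quoted verbatim from Zeeman's paper and used as an imported black box (Lemma~\ref{lemma:collapses} and Proposition~\ref{prop:hTriangle} both begin by invoking it). So there is no in-paper argument to compare against, and your proposal should be judged as a free-standing proof of Zeeman's result. Judged that way, it has a genuine gap exactly at the theorem's point of difficulty. You correctly set up the cell structure, correctly observe that $D$ itself has no free face, and correctly diagnose that the naive prism collapse (top face, then walls) terminates at a copy of $D\times\{1\}$ and gets stuck. But the resolution you offer --- ``pass to a subdivision fine enough that the three occurrences of $a$ can be peeled off one at a time \dots this is exactly Zeeman's construction on p.342'' --- is not an argument; it is a pointer to the source for the one step that constitutes the entire content of the theorem. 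No triangulation is specified, no collapse order is given, and no verification is made that a free face exists at each stage. The heuristic that ``each elementary expansion needed to reduce $D$ is realized as a sub-collapse in the extra dimension'' is a plausible slogan (and is morally why one expects $K\times I$ to be collapsible for contractible $K$ --- which is precisely Zeeman's still-open conjecture), but it is not a proof; if it worked as stated for arbitrary contractible $2$-complexes it would settle that conjecture. To close the gap you must exhibit Zeeman's explicit triangulation of $D$ and the explicit ordered list of elementary collapses of the triangulated prism, checking freeness of each face as you go.

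Two further defects. First, your closing sentence ``packaging the individual elementary collapses via Lemma~\ref{lemma:collapses}'' is circular: in this paper Lemma~\ref{lemma:collapses} is \emph{deduced from} Theorem~\ref{thm:dunceHat}, so it cannot be used to certify it. Second, your opening elementary collapse is performed on the one-vertex, one-loop CW structure with $F$ attached along $a\,a\,a^{-1}$; this is not a regular CW (or simplicial) complex, and elementary collapses in the Whitehead--Zeeman sense are defined for simplicial or polyhedral complexes, so you must triangulate \emph{before} the first move, not only at the step where you get stuck. The correct honest options here are either to reproduce Zeeman's explicit collapse sequence in full, or to do what the paper does and cite the result without proof.
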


E.C. Zeeman noted that a dunce hat~\cite{Zeeman1963dunceHat} (cone) $D$ is the simplest polyhedron contractible via a homotopy $f:D\times I\to\ \mbox{cell complex}\ Y$, but not collapsible in the sense of J.H.C. Whitehead~\cite[\S 3]{Whitehead1939homotopy}.

Zeeman's dunce hat has a simply connected surface.
For Riemann~\cite[p. 7]{Riemann1851thesis}, a surface is {\bf simply connected}, provided each cross-cut is a line that pierces the interior of the surface.  Here, a line is a path $\ell:I\to 2^{\mathbb{R}^2}$ with $\ell(t)$ piercing the interior of cone $D$ for $t\in (0,1)$. 

\begin{example}
The collapse of a cone $D\times I$ to a triangle $\boldsymbol{\bigtriangleup E}$ is shown in Fig.~\ref{fig:duncehatTriangle}.  This collapse results from the union of the paths $\ell_i:I\to D$ that cut through the interior of the triangle from points $p$ on the edge of the cone to a points on a line segment $\arc{bc}$ that spans the space between the edges at the base of the cone, {\em i.e.},
\begin{align*}
D\times I &= \left\{\ell_0\right\},\\
\left\{\ell_0\right\} & \cup D\setminus\left\{\ell_0\right\}\times I = \left\{\ell_0,\ell_1\right\},\cdots,\\
\left\{\ell_0,\ell_1,\dots,\ell_i,\dots,\ell_{n-2[n]}\right\}& \cup
D\setminus\left\{\ell_0,\ell_1,\dots,\ell_i,\dots,,\ell_{n-1[n]}\right\}\times I\\ 
         &= \bigcup \ell_i\\
				 &= \bigtriangleup E\in D.
\end{align*}
In other words, the surface of cone $D$ is simply connected so that each line in the deformation of $D$ to a triangle is a cross-cut path, which punctures the interior of the cone between a surface boundary point $p$ and a point $q$ on a line $\arc{bc}$ spanning the base of the cone.  
\qquad \textcolor{blue}{\Squaresteel}
\end{example}


\begin{lemma}\label{lemma:collapses}
Let $\bigtriangleup abc$ be a triangle with vertex $a$ at the top of a cone $D$ with a simply connected surface.  And let base $\arc{bc}$ on $\bigtriangleup abc$ be an edge on the base of $D$.  Then $D\times I\searrow \bigtriangleup abc$. 
\end{lemma}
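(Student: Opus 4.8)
The plan is to lean on Zeeman's collapsibility result, Theorem~\ref{thm:dunceHat}, for the \emph{existence} of a collapse of $D\times I$, and then to pin down the terminal subcomplex of that collapse as $\bigtriangleup abc$ by organizing the elementary collapses along the cross-cut sweep-out exhibited in the preceding Example. First I would fix the cross-cut foliation of the cone. Since $D$ has a simply connected surface, the Riemann characterization quoted above makes each cross-cut path $\ell_i:I\to D$ a line piercing the interior of $D$, with $\ell_i(0)$ on the lateral surface of the cone (the edge $\arc{ab}$ or $\arc{ac}$) and $\ell_i(1)$ on the base edge $\arc{bc}$. By Def.~\ref{def:pathTriangle} these $\ell_i$ are exactly the edges perpendicular to $\arc{bc}$ that define $\Int(h\bigtriangleup abc)$, and by Lemma~\ref{lemma:path} each $\ell_i$ is geometrically realized as an edge. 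With the apex $a$ fixed, the family $\left\{\ell_i\right\}$ foliates the profile of the cone $D$.

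Next I would organize the elementary collapses to run along this foliation. Theorem~\ref{thm:dunceHat} supplies a finite collapse $D\times I = K_0\searrow K_1\searrow\cdots$, and I would choose it so that the $i$-th elementary collapse removes the product cell $\ell_i\times I$ through a free face, mirroring the displayed recursion
\[
\left\{\ell_0,\dots,\ell_i\right\}\cup \bigl(D\setminus\left\{\ell_0,\dots,\ell_i\right\}\bigr)\times I
\]
of the Example. Each such removal is an elementary collapse precisely because $\ell_i\times I$ carries a free face on the boundary of the current complex. Once every cross-cut has been swept down to the base, the remaining subcomplex is the union $\bigcup_i\ell_i$. Because the cross-cuts vary continuously from apex to base and jointly cover the region bounded by $\arc{ab}$, $\arc{ac}$ and $\arc{bc}$, this union is exactly the filled triangle $\bigtriangleup abc$, whence $D\times I\searrow\bigtriangleup abc$.

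The main obstacle is the middle step: unlike the bare existence of a collapse (which Zeeman gives) or mere contractibility to a point, I must arrange the collapse to halt \emph{precisely} at $\bigtriangleup abc$. This forces me to verify that the cross-cut pairs can be removed in the prescribed order while a free face remains available at every stage, that removing $\ell_i\times I$ neither disconnects the complex nor destroys the freeness needed for $\ell_{i+1}$, and that the residual cells reassemble into the two-cell $\bigtriangleup abc$ rather than continuing to collapse onto $\arc{bc}$ or onto a single vertex. This is combinatorial bookkeeping on the free faces rather than deep topology, but it is where the real care lies; the simply connected surface hypothesis is what guarantees that the cross-cuts never cross, so that the ordering of the collapse is consistent and the terminal stage is the full triangle.
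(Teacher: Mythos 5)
Your proposal follows essentially the same route as the paper's proof: invoke Theorem~\ref{thm:dunceHat} for collapsibility of $D\times I$, then realize $\bigtriangleup abc$ as the union of cross-cut paths $\ell_i$ perpendicular to $\arc{bc}$ running from the lateral boundary of $D$ down to its base, so that the collapse terminates at the filled triangle. If anything you are more explicit than the paper about the one step it leaves unargued --- that the elementary collapses can be ordered along the cross-cut foliation with a free face available at each stage so that the terminal complex is exactly $\mathop{\bigcup}\limits_{i}\ell_i=\bigtriangleup abc$; the paper simply asserts this via the displayed union.
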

\begin{proof}
From Theorem~\ref{thm:dunceHat}, $D\times I$ collapses.    Let $\ell:I\to 2^{\mathbb{R}^2}$ be a path that puntures $D$ with $\ell(0)$ on the boundary of $D$ and $\ell(1)$ on $\arc{bc}$ on the base of $D$.  Then let the homotopy
$f:D\times I\to \bigtriangleup abc$ be defined by
\begin{align*}
f(D,i) &= \ell_i\cap \bigtriangleup abc = \left\{p_i\right\}\in \bigtriangleup abc,\ \mbox{with }\ \ell_i\bot\arc{bc}\in D.\\
\bigtriangleup abc &= \mathop{\bigcup}\limits_{0<i\leq \abs{\arc{bc}}} \ell_i.
\end{align*}
Hence, $D\times I$ collapses to $\bigtriangleup abc$.

\end{proof}

\begin{proposition}\label{prop:hTriangle}
Let $h\bigtriangleup v_1v_2v_3$  be a path triangle with vertex $v_2$ at the top of a filled cone $D$ with a bounded, simply connected surface.  And let base $\arc{v_1v_3}$ on $h\bigtriangleup v_1v_2v_3$ be on the base of $D$.  Then $D\times I\searrow   h\bigtriangleup v_1v_2v_3$.
\end{proposition}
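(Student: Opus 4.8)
The plan is to mirror the argument of Lemma~\ref{lemma:collapses}, adapting it from the hollow cone with ordinary triangle $\bigtriangleup abc$ to the \emph{filled} cone with \emph{filled} path triangle $h\bigtriangleup v_1v_2v_3$. By Theorem~\ref{thm:dunceHat}, the cone $D\times I$ is collapsible, supplying the finite sequence of elementary collapses that reduces the $3$-dimensional cone. First I would align the path-triangle boundary with the cone: the apex $v_2$ sits at the top of $D$, the base $\arc{v_1v_3}$ lies on the base of $D$, and the boundary paths $h_1,h_2,h_3$ of $h\bigtriangleup v_1v_2v_3$ realize the two slant edges and the base of $D$, exactly as in Fig.~\ref{fig:duncePathTriangle}.

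Next, following Def.~\ref{def:pathTriangle}, I would introduce the family of cross-cut paths $\ell_i:I\to D$, each perpendicular to $\arc{v_1v_3}$, with $\ell_i(0)$ on the boundary of $D$ and $\ell_i(1)$ on the base $\arc{v_1v_3}$. Because the surface of $D$ is simply connected, each $\ell_i$ punctures the filled cone and, under flattening, sweeps out a full vertical segment of the filled triangle. I would then define the collapsing homotopy $f:D\times I\to h\bigtriangleup v_1v_2v_3$ slice-by-slice so that
\begin{align*}
f(D,i) &= \ell_i\cap h\bigtriangleup v_1v_2v_3,\ \mbox{with}\ \ell_i\bot\arc{v_1v_3}\in D,\\
h\bigtriangleup v_1v_2v_3 &= \mathop{\bigcup}\limits_{0<i\leq\abs{\arc{v_1v_3}}}\ell_i.
\end{align*}
Here the union is taken over the whole cross-cut segments, so that $\bigcup_i\ell_i$ recovers $\bdy(h\bigtriangleup v_1v_2v_3)\cup\Int(h\bigtriangleup v_1v_2v_3)$, i.e., the filled $2$-cell.

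The step I expect to be the main obstacle is verifying that the collapse preserves the filled $2$-cell structure, rather than merely tracing a boundary $1$-cycle as in Lemma~\ref{lemma:collapses}. In the hollow case each cross-cut meets the target triangle in a single point, so the flattened image is just the boundary; in the present filled case each cross-cut contributes an entire segment, and one must confirm that no interior point is lost and that the swept region stays a single $2$-cell throughout the deformation. I would settle this by appealing directly to the homotopy-defined interior of Def.~\ref{def:pathTriangle}: since $\Int(h\bigtriangleup v_1v_2v_3)=h\bigtriangleup v_1v_2v_3\setminus f(\ell_t)$ for every $t\in I$ and $h\bigtriangleup v_1v_2v_3=\bdy(h\bigtriangleup v_1v_2v_3)\cup\Int(h\bigtriangleup v_1v_2v_3)$, the accumulated cross-cuts cover the entire filled triangle. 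Combining this with the elementary collapses of Theorem~\ref{thm:dunceHat} then yields $D\times I\searrow h\bigtriangleup v_1v_2v_3$.
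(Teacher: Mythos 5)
Your proposal follows essentially the same route as the paper's proof: both build the interior of $h\bigtriangleup v_1v_2v_3$ as the union of cross-cut paths $\ell_i$ perpendicular to the base $\arc{v_1v_3}$, identify the three boundary paths with the slant edges and base of the cone, and then reduce to the argument of Lemma~\ref{lemma:collapses} via Theorem~\ref{thm:dunceHat}. The only cosmetic difference is that the paper writes out the three boundary paths $h_l,h_r,h_b$ explicitly with their endpoint assignments, whereas you identify them with the cone's edges in prose; the substance is the same.
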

 
\begin{proof}
To construct the interior of $h\bigtriangleup v_1v_2v_3$, let $\ell_i:I\to Y$ be a path that punctures the interior of solid cone $\bigtriangleup v_1v_2v_3$.  Then let
\begin{align*}
\ell_i(t) &= \ell_i(t)\cap \Int (\bigtriangleup v_1v_2v_3), t\in \left(0,1\right),\ \ell_i\bot\arc{v_1v_3}\in \bigtriangleup v_1v_2v_3.\\
\Int (h\bigtriangleup v_1v_2v_3) &= \mathop{\bigcup}\limits_{0<i<\abs{\arc{v_1v_3}}} \ell_i(t).
\end{align*}
To construct the edges which are paths of $h\bigtriangleup abc$, let paths $h_l,h_r,h_b:I\to D$ be defined as follows:
\begin{align*}
h_l(0) &\assign v_1,\\
h_l(1) &\assign v_2,\\
h_l(t) &= h_l(t)\cap \Int (\arc{v_1v_2}), t\in \left(0,1\right)\\
h_r(0) &\assign v_2,\\
h_r(1) &\assign v_3,\\
h_r(t) &\assign h_r(0)\cap \Int (\arc{v_2v_3}), t\in \left(0,1\right)\\
h_b(0) &\assign v_1,\\
h_b(1) &\assign v_3,\\
h_b(t) &\assign h_b(0)\cap \Int (\arc{v_1v_3}), t\in \left(0,1\right).
\end{align*}
Then the proof is symmetric with the proof of Lemma~\ref{lemma:collapses} and the desired result follows.
\end{proof}

\begin{remark}
A path triangle $h\bigtriangleup E$ differs from a triangle in a Zeeman dunce hat cone, since paths construct the bounding edges on $h\bigtriangleup E$. 
\qquad \textcolor{blue}{\Squaresteel}
\end{remark}

\begin{example}\label{ex:dunceHatTriangle}
From Lemma~\ref{lemma:collapses}, a filled cone collapses to a path triangle.  A sample $\bigtriangleup v_1v_2v_3\in D$ collapsing of a cone $D$ via a homotopy $D\times I$ to a path triangle $h\bigtriangleup v_1v_2v_3\in D$ (briefly, $h\bigtriangleup E$) is shown in Fig.~\ref{fig:duncePathTriangle}.  Here, a sequence of three paths $h_1,h_2,h_3$ form the edges of the path triangle $h\bigtriangleup E$ with
\begin{align*}
h_1(0) &= h_3(1)\assign v_1.\\
h_1(1) &= h_2(0)\assign v_2.\\
h_3(0) &= h_2(1)\assign v_3.\ \mbox{\qquad \textcolor{blue}{\Squaresteel}}\\
\end{align*}
\end{example}

The path triangle $h\bigtriangleup E$ in Example~\ref{ex:dunceHatTriangle} has a geometric realization $\abs{h\bigtriangleup E}$, which is a filled Euclidean triangle with straight edges such as the ones in a Delaunay trianglulation of a space $X$, {\em i.e.},
\begin{align*}
I &= [0,1]\\
h_1:I&\to X, h_2:I \to X, h_3:I \to X.\\
h_1\cap h_3 &= h_1(0)\assign h_3(1),\\
h_1\cap h_2 &= h_1(1)\assign h_2(0),\\
h_2\cap h_3 &= h_2(1)\assign h_3(0),\\
h_3\cap h_1 &= h_3(1)\assign h_1(0),\\ 
\bdy(h\bigtriangleup E) &= \left\{h_1,h_2,h_3\right\}\ \mbox{(triangle boundary)}.\\
\Int(h\bigtriangleup E) &\neq \emptyset\ \mbox{(nonvoid interior)}.\\
\bigtriangleup E &= \bdy(h\bigtriangleup E)\cup \Int(h\bigtriangleup E).\\ 
h\bigtriangleup E &\mapsto \abs{h\bigtriangleup E}.
\end{align*}

\begin{example}\label{ex:htriangle}
A sample path triangle $h\bigtriangleup h_1(0)h_2(0)h_3(0)$ and its geometric realization $\abs{h\bigtriangleup v_1v_2v_3}$ are shown in Fig.~\ref{fig:htriangle}.  From Lemma~\ref{lemma:path}, we know that every path in $h\bigtriangleup h_1(0)h_2(0)h_3(0)$ has a geometric realization as a 1-cell (edge).
 Here, triangle $\abs{\bigtriangleup v_1v_2v_3}$ is a geometric realization path triangle $h\bigtriangleup h_1(0)h_2(0)h_3(0)$, containing a sequence of overlapping paths $h_1,h_2,h_3$, {\em i.e.},
\begin{align*}
h_1,h_2,h_3 &\in h\bigtriangleup E \mapsto\ \mbox{\bf edge}\ \arc{v_1v_2}\in \abs{\bigtriangleup v_1v_2v_3},\ \mbox{with}\\ 
h_1(0)&\assign h_3(1)\assign v_1,h_1(1)\assign h_2(0)\assign v_2\ \mbox{and}\\
h_1(t)&\assign\ \mbox{\bf vertex}\  p\in \mbox{edge}\ \arc{v_1v_2}, t\in [0,1].\\
h_1,h_2,h_3 &\in h\bigtriangleup E \mapsto\ \mbox{\bf edge}\ \arc{v_2v_3}\in \abs{\bigtriangleup v_1v_2v_3},\ \mbox{with}\\ 
h_1(1)&\assign h_2(0)\assign v_2,h_2(1)\assign h_3(0)\assign v_3 \ \mbox{and}\\
h_2(t)&\assign\ \mbox{\bf vertex}\ q\in \mbox{edge}\ \arc{v_2v_3}, t\in [0,1].\\
h_1,h_2,h_3 &\in h\bigtriangleup E\mapsto\ \mbox{\bf edge}\ \arc{v_3v_1}\in \abs{\bigtriangleup v_1v_2v_3},\ \mbox{with}\\ 
h_2(1)&\assign h_3(0)\assign v_3,h_3(1)\assign h_1(0)\assign v_1 \ \mbox{and}\\
h_3(t)&\assign\ \mbox{\bf vertex}\ r\in \mbox{edge}\ \arc{v_3v_1}, t\in [0,1].\ \mbox{\qquad \textcolor{blue}{\Squaresteel}}
\end{align*}
\end{example}

\begin{figure}[!ht]
\begin{pspicture} 
(-1.0,0.0)(5.0,5.0)
\centering
\pscircle*[fillstyle=solid,fillcolor=green!5]%
(2,2){1.4}
\pscircle[fillstyle=solid,fillcolor=green!5]%
(2,2){1.4}
\psline[linewidth=0.5pt,linecolor=black,arrowscale=1.0]{-v}%
(0.60,1.8)(0.60,2.0)
\psline[linewidth=0.5pt,linecolor=black,arrowscale=1.0]{-v}%
(3.4,2.2)(3.4,2.0)
\psline[linewidth=0.5pt,linecolor=black,arrowscale=1.0]{v-}%
(2.0,0.6)(2.2,0.6)
\psdots[dotstyle=o,dotsize=2.8pt,linewidth=1.2pt,linecolor=black,fillcolor=red!80]%
(1,1)(2,3.4)(3,1)
\rput(0.30,2.0){\footnotesize $\boldsymbol{h_1}$}
\rput(0.3,1.0){\footnotesize $\boldsymbol{h_1(0)}$}
\rput(0.3,0.7){\footnotesize $\boldsymbol{= h_3(1)}$} %
\rput(0.2,0.4){\footnotesize $\boldsymbol{\assign v_1}$}
\rput(2.1,4.2){\footnotesize $\boldsymbol{h_1(1)}$}
\rput(2.2,3.9){\footnotesize $\boldsymbol{=h_2(0)}$}
\rput(2.0,3.6){\footnotesize $\boldsymbol{\assign v_2}$}
\rput(3.7,2.0){\footnotesize $\boldsymbol{h_2}$}
\rput(3.5,1.0){\footnotesize $\boldsymbol{h_3(0)}$}
\rput(3.6,0.7){\footnotesize $\boldsymbol{= h_2(1)}$} %
\rput(3.4,0.5){\footnotesize $\boldsymbol{\assign v_3}$} 
\rput(2.0,0.3){\footnotesize $\boldsymbol{h_3}$}
\rput(2.0,2.0){\large $\boldsymbol{h\mathop{\bigtriangleup}\limits^{\circ} E}$}
\end{pspicture}
\begin{pspicture}
(-1.0,0.0)(5.0,4.0)
\rput(2,2){
    \psSolid[
        object=sphere,
        r=0.5,
        linecolor=black,
        fillcolor=green!5,
        action=draw*,mode=4,
        ngrid=18 18
    ]}
\psdots[dotstyle=o,dotsize=2.8pt,linewidth=1.2pt,linecolor=black,fillcolor=red!80]%
(1.0,1)(2.0,3.4)(3.05,1.0)
\rput(2,1){\psline(0,9pt)(9pt,9pt)(9pt,0)}
\rput(0.7,1.0){\footnotesize $\boldsymbol{v_1}$} 
\rput(2.0,3.6){\footnotesize $\boldsymbol{v_2}$} 
\rput(3.4,1.0){\footnotesize $\boldsymbol{v_3}$} 
\rput(2.0,2.0){\large $\boldsymbol{K\times I}$}
\rput(-0.8,2.3){\large $\boldsymbol{\swarrow}$}
\end{pspicture}
\caption[]{Billiard ball cell complex $\boldsymbol{K\times I}$ Zeeman-style collapses to round Path Triangle $\boldsymbol{h\mathop{\bigtriangleup}\limits^{\circ} E}$, {\em i.e.}, $\boldsymbol{\mbox{sphere}\ K\times I}$ collapses via a homotopy to a path 2-cell (triangle) $\boldsymbol{h\mathop{\bigtriangleup}\limits^{\circ} E}$.}
\label{fig:roundPathTriangle}
\end{figure}

Let $K$ be a filled sphere 
 in a space $\mathbb{R}^3$. A {\bf round triangle} $abc$ (denoted by $\mathop{\bigtriangleup}\limits^{\circ} abc\in K$) is a sequence of edges on the circumference of a circle that lies on a slice of a solid sphere with edges on the surface of the sphere.   

\begin{remark}
Paths in a path triangle resemble trajectories of a particle in spacetime.  From a Physics perspective, a path triangle would be viewed as a sequence of trajectories with overlapping endpoints. In keeping with W.A. Veech's introduction of triangular billiards~\cite{Veech1989triangularBilliards}, spheres in this work are called billiard balls. Unlike Veech's billiard triangles with straight edges, the homotopy collapse of a billiard ball results in a round triangle. Round path triangles are useful in covering a cell complex with a curvilinear boundary.  For example, in the path triangles $h\bigtriangleup E$ in the triangulation $h\bigtriangleup^n K$ of a cell complex $K$ with curved edges such as those typically found in video frame foreground shapes, we have
\begin{align*}
K &= \mathop{\bigcup}\limits_{h\bigtriangleup E\in h\bigtriangleup^n K}h\bigtriangleup E.\\
\mbox{nerve}\ \Nrv K &= \mathop{\bigcap}\limits_{h\bigtriangleup E\in h\bigtriangleup^n K}h\bigtriangleup E \neq \emptyset.
\end{align*} 
and $\Nrv K$ contracts to a vertex $p$ in the intersection of the path triangles in $h\bigtriangleup^n K$.  In other words, $h\bigtriangleup^n K$ provides a good cover of cell complex $K$.
\end{remark}

\begin{proposition}\label{prop:collapseSpher2Circle}
Let $\mathop{\bigtriangleup}\limits^{\circ} abc$ be a round path triangle in a billiard ball complex $K$ in a space $X$ with triangle edges $ab,ab,bc$ on the boundary of $K$.  And let base $bc$ on $h\bigtriangleup abc$ be an edge on the base of $D$.  
Sphere $sph K\times I\searrow h\bigtriangleup abc$ collapses to a round triangle $\mathop{\bigtriangleup}\limits^{\circ} abc$.
\end{proposition}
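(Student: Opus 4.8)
The plan is to mirror the structure of the proofs of Lemma~\ref{lemma:collapses} and Proposition~\ref{prop:hTriangle}, replacing the straight cross-cuts of the dunce-hat cone with curved cross-cuts adapted to the spherical boundary of the billiard ball $K$. First I would invoke collapsibility: since a solid ball is contractible and PL-collapsible, the product $K\times I$ is collapsible in the sense of Theorem~\ref{thm:dunceHat}, so there is a finite sequence of elementary collapses carrying $K\times I$ down toward a lower-dimensional subcomplex. The task is then to identify the terminal subcomplex as the round triangle $\mathop{\bigtriangleup}\limits^{\circ} abc$.

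Next I would construct the interior of $\mathop{\bigtriangleup}\limits^{\circ} abc$ as a union of cross-cut paths. For each $i$ let $\ell_i:I\to K$ be a cross-cut path puncturing the interior of the solid sphere, with $\ell_i(0)$ on the boundary of $K$ and $\ell_i(1)$ on the base arc $\arc{bc}$, meeting $\arc{bc}$ transversally (the spherical analogue of $\ell_i\perp\arc{bc}$ used in Proposition~\ref{prop:hTriangle}). Define the homotopy $f:K\times I\to \mathop{\bigtriangleup}\limits^{\circ} abc$ by $f(K,i)=\ell_i\cap \mathop{\bigtriangleup}\limits^{\circ} abc$, so that the round triangle is swept out as $\Int(\mathop{\bigtriangleup}\limits^{\circ} abc)=\bigcup_i \ell_i(t)$ for $t\in(0,1)$, exactly as in the earlier interior construction. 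I would then realize the three bounding edges: because $ab,ac,bc$ lie on the circumference of a circle cut from a slice of the solid sphere, each is the image of a boundary path $h_l,h_r,h_b:I\to K$, and Lemma~\ref{lemma:path} guarantees each such path realizes as a $1$-cell, here a curvilinear edge. Assembling the three boundary arcs with the interior gives $\mathop{\bigtriangleup}\limits^{\circ} abc=\bdy(\mathop{\bigtriangleup}\limits^{\circ} abc)\cup\Int(\mathop{\bigtriangleup}\limits^{\circ} abc)$, and the argument becomes symmetric with the proof of Lemma~\ref{lemma:collapses}, whence $K\times I\searrow \mathop{\bigtriangleup}\limits^{\circ} abc$.

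The main obstacle I anticipate is the curvature. In the cone case the cross-cuts are straight segments orthogonal to a flat base, so their union visibly fills a flat triangle; on the sphere the cross-cuts are geodesic or chord arcs, and the notion of ``perpendicular to $\arc{bc}$'' must be reinterpreted intrinsically rather than in the ambient $\mathbb{R}^3$. The delicate point is to verify that these curved cross-cuts foliate the slice region continuously, without gaps or overlaps that would obstruct the elementary collapses, so that the sweep is a genuine collapse and not merely a deformation retraction. I would handle this by taking the $\ell_i$ to be the level sets of the intrinsic distance-to-$\arc{bc}$ function restricted to the spherical slice bounded by $ab$, $ac$, $bc$; this foliates the region by disjoint arcs indexed monotonically, makes each stage a legitimate free-face collapse, and exhibits the terminal complex as precisely the round path triangle with its curvilinear boundary.
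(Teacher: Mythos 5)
Your proposal follows essentially the same route as the paper: the paper's entire proof is to replace the cone $D$ with the sphere $sph\,K$ and the triangle $\bigtriangleup abc$ with the round triangle $\mathop{\bigtriangleup}\limits^{\circ} E$ in the proof of Proposition~\ref{prop:hTriangle}, which is precisely the substitution you carry out. Your version is in fact more careful than the paper's one-line argument, since you explicitly address how the perpendicular cross-cuts must be reinterpreted intrinsically on the curved slice and why the resulting arcs foliate the region without gaps, a point the paper leaves entirely implicit.
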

\begin{proof}
Replace cone $D$ with sphere {$sph K$} 
 and triangle $\bigtriangleup abc$ with round triangle $\mathop{\bigtriangleup}\limits^{\circ} E$ in the proof of Prop~\ref{prop:hTriangle} and the desired result follows.
\end{proof} 

\begin{proposition}
A sphere $sph K\times I\searrow h\mathop{\bigtriangleup}\limits^{\circ} abc$ collapses to a round path triangle $h\mathop{\bigtriangleup}\limits^{\circ} E$.
\end{proposition}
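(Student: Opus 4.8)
The plan is to mirror the proof of Proposition~\ref{prop:hTriangle}, performing the same substitution already used in Proposition~\ref{prop:collapseSpher2Circle} (cone $D$ replaced by the solid sphere $sph K$), but now carrying along the three bounding paths so that the target is the round \emph{path} triangle $h\mathop{\bigtriangleup}\limits^{\circ} E$ rather than the bare round triangle $\mathop{\bigtriangleup}\limits^{\circ} abc$. Thus the relationship between this proposition and Proposition~\ref{prop:collapseSpher2Circle} is precisely the relationship between Proposition~\ref{prop:hTriangle} and Lemma~\ref{lemma:collapses}: the ambient collapse is already available, and the only additional content is the explicit realization of the three curvilinear edges as overlapping paths.

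First I would invoke Proposition~\ref{prop:collapseSpher2Circle} to obtain the collapse $sph K\times I\searrow \mathop{\bigtriangleup}\limits^{\circ} abc$, which supplies the homotopy carrying the solid sphere onto a round triangle whose edges lie on the sphere surface. Next, following Proposition~\ref{prop:hTriangle}, I would construct the interior of $h\mathop{\bigtriangleup}\limits^{\circ} E$ as a union of cross-cut paths $\ell_i:I\to sph K$ that puncture the interior of the solid sphere, each perpendicular to the curved base $\arc{v_1v_3}$, with $\ell_i(0)$ on the boundary of $K$ and $\ell_i(1)$ on $\arc{v_1v_3}$:
\begin{align*}
\ell_i(t) &= \ell_i(t)\cap \Int(\mathop{\bigtriangleup}\limits^{\circ} v_1v_2v_3),\ t\in(0,1),\ \ell_i\bot\arc{v_1v_3},\\
\Int(h\mathop{\bigtriangleup}\limits^{\circ} v_1v_2v_3) &= \mathop{\bigcup}\limits_{0<i<\abs{\arc{v_1v_3}}}\ell_i(t).
\end{align*}

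Then I would realize the three edges as curvilinear paths $h_1,h_2,h_3:I\to sph K$ whose images are the arcs $\arc{v_1v_2},\arc{v_2v_3},\arc{v_3v_1}$ on the sphere surface, with overlapping endpoints $h_1(0)=h_3(1)\assign v_1$, $h_1(1)=h_2(0)\assign v_2$, and $h_2(1)=h_3(0)\assign v_3$, so that $\bdy(h\mathop{\bigtriangleup}\limits^{\circ} E)=\{h_1,h_2,h_3\}$ and $h\mathop{\bigtriangleup}\limits^{\circ} E=\bdy(h\mathop{\bigtriangleup}\limits^{\circ} E)\cup\Int(h\mathop{\bigtriangleup}\limits^{\circ} E)$. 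At this point the construction is symmetric with the proof of Proposition~\ref{prop:hTriangle} under the dictionary (cone $\mapsto$ sphere, straight edge $\mapsto$ curvilinear arc), and the desired collapse follows.

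The hard part will be the geometric bookkeeping forced by curvature: on the sphere the bounding edges are arcs rather than straight segments, so the perpendicularity condition $\ell_i\bot\arc{v_1v_3}$ and the notion of a cross-cut ``puncturing the interior'' must be read along the slice structure of the solid sphere rather than in the flat sense, and one must check that each $\ell_i$ is a genuine cross-cut in the sense of the Introduction (both endpoints in the distinguished point set, interior in $\Int K$). Everything else---continuity of $h_1,h_2,h_3$, the union exhausting the interior, and the elementary-collapse structure---transfers verbatim from Proposition~\ref{prop:hTriangle} once this reinterpretation is fixed.
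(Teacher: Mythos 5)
Your proposal is correct and follows essentially the same route as the paper: the paper's own proof is a one-line substitution (replace the round triangle $\mathop{\bigtriangleup}\limits^{\circ} abc$ in Proposition~\ref{prop:collapseSpher2Circle} with the round path triangle $h\mathop{\bigtriangleup}\limits^{\circ} E$, citing Lemma~\ref{lemma:collapses}), and you simply carry out that substitution explicitly by transporting the cross-cut interior and the three bounding paths from Proposition~\ref{prop:hTriangle} to the sphere. Your added care about reading perpendicularity and cross-cuts along the slice structure of the solid sphere is a reasonable elaboration of detail the paper leaves implicit, not a different argument.
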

\begin{proof}
Replace triangle $\bigtriangleup abc$ with round path triangle $h\mathop{\bigtriangleup}\limits^{\circ} E$ in the proof of Prop.~\ref{prop:collapseSpher2Circle} and the desired result follows from Lemma~\ref{lemma:collapses}.
\end{proof} 

\begin{example}
A sample collapse of a billiard ball complex to a round triangle is shown in Fig.~\ref{fig:roundPathTriangle}.
\qquad \textcolor{blue}{\Squaresteel}
\end{example}

\begin{theorem}\label{prop:1cycle}
Every path triangle has a geometric realization as a 1-cycle.
\end{theorem}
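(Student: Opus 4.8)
The plan is to reduce the statement to the three defining pieces of a path triangle and then combine the edge-realization lemma with the definition of a 1-cycle. Since the geometric data of a path triangle is carried by its bounding sequence of paths $\bdy(h\bigtriangleup E)$, I would first unpack Definition~\ref{def:pathTriangle}: a path triangle $h\bigtriangleup E$ is a sequence of three overlapping paths $h_1,h_2,h_3$ \emph{with no end path}, whose overlaps (recorded in Example~\ref{ex:dunceHatTriangle}) are
\begin{align*}
h_1(0) &= h_3(1)\assign v_1,\\
h_1(1) &= h_2(0)\assign v_2,\\
h_2(1) &= h_3(0)\assign v_3.
\end{align*}
These three closure relations are exactly the data the argument needs.

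Next I would apply Lemma~\ref{lemma:path} to each path separately: each $h_i$ is geometrically realized as an edge (1-cell), namely $\abs{h_1}=\arc{v_1v_2}$, $\abs{h_2}=\arc{v_2v_3}$, and $\abs{h_3}=\arc{v_3v_1}$, each carrying its two endpoints as 0-cells and its interior points as the interior of the edge. Taking the union then gives $\abs{\bdy(h\bigtriangleup E)}=\abs{h_1}\cup\abs{h_2}\cup\abs{h_3}$, a collection of three edges whose only shared points are the vertices $v_1,v_2,v_3$.

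The heart of the argument is to verify that this union satisfies Definition~\ref{def:filledCycle}, which I would check in its two requirements. Path-connectedness (Definition~\ref{def:pathConn}) is immediate from the overlap relations, since consecutive edges share a vertex along $v_1\to v_2\to v_3\to v_1$. The condition \emph{no end vertex} is where the ``no end path'' hypothesis of Definition~\ref{def:pathTriangle} does the real work: because the sequence $h_1,h_2,h_3$ closes up (so that $h_3(1)=h_1(0)$), each vertex $v_i$ is an endpoint of exactly two of the three edges, leaving no free (dangling) end. Hence $\abs{\bdy(h\bigtriangleup E)}$ is a collection of path-connected 0-cells on 1-cells with no end vertex, i.e.\ a 1-cycle, realized as a simple closed curve.

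The main obstacle I anticipate is not a computation but a point of care: one must argue that the three edges meet \emph{only} at the corner vertices $v_1,v_2,v_3$, so that the closed curve is genuinely simple and no spurious end vertex or self-crossing is introduced. I would settle this by appealing to the fact that $\bdy(h\bigtriangleup E)$ bounds the 2-cell $h\bigtriangleup E$ inside the ambient Hausdorff, simply connected space, so the three paths are pairwise disjoint away from their shared endpoints. Finally, I would note that the argument is insensitive to whether the realized edges are straight (an ordinary path triangle) or curvilinear (a round path triangle $h\mathop{\bigtriangleup}\limits^{\circ} E$): in either case the combinatorial pattern of three edges joined cyclically at three vertices is identical, so the conclusion holds for \emph{every} path triangle.
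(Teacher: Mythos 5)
Your proposal is correct and follows essentially the same route as the paper's proof: unpack Definition~\ref{def:pathTriangle}, apply Lemma~\ref{lemma:path} to realize each path as an edge, and then check Definition~\ref{def:filledCycle}. You simply supply more detail than the paper does (the explicit closure relations, the verification that no end vertex remains, and the remark on simplicity of the resulting closed curve), which strengthens rather than changes the argument.
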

\begin{proof}
From Def.~\ref{def:pathTriangle}, a path triangle $h\bigtriangleup K$ is a collection of path-connected vertexes with no end vertex and with nonvoid interior. From Lemma~\ref{lemma:path},
the geometric realization of each path in $h\bigtriangleup K$ is an edge in its geometric realization $\abs{\bigtriangleup K}$. Consequently, the sequence of paths in $h\bigtriangleup K$ results in a sequence of 1-cells (edges) attached to each other. Hence, from Def.~\ref{def:filledCycle}, $\abs{\bigtriangleup K}$ is a 1-cycle.
\end{proof}

A {\bf path-based triangulation} of a cell complex $K$ (denoted by $h\bigtriangleup^n K$) is a collection of path triangles in which adjacent path triangles either have a common vertex or a common edge.  The geometric realization of $h\bigtriangleup^n K$ is denoted by $\abs{h\bigtriangleup^n K}$.

\begin{example}
A sample bounded cell complex $K$ is shown in Fig.~\ref{fig:bornology}. The path-based triangulation of cell complex $h\bigtriangleup^n K$ displayed in Fig.~\ref{fig:bornology0} constucts an example of a Rotman-Vigolo Hawaiian fish, inspired by the bird complex in~\cite[Fig. 11.2, p. 367]{Rotman1965TheoryOfGroups} and Hawaiian earring in~\cite[\S 3.31, p. 79]{Vigolo2018HawaiianEarrings}. 
\qquad \textcolor{blue}{\Squaresteel}
\end{example}

\begin{definition}\label{def:orphan} {\bf Orphan Vertex}.\\
Let $K$ be a finite, bounded cell complex.  An {\bf orphan
vertex} (briefly, {\bf orphan}) is a vertex is not an end vertex of a path and is located either on the boundary $\bdy E$ or on an edge in the interior $\Int K$ of $K$.
\qquad \textcolor{blue}{\Squaresteel}
\end{definition} 

\begin{example}
Vertexes $p,q$ in Fig.~\ref{fig:bornology} are examples of orphans that cease being orphans in Fig.~\ref{fig:bornology0},
since path $h$ constructs an edge attached $p$ and $q$.
\qquad \textcolor{blue}{\Squaresteel}
\end{example}

%


A path-based triangulation of a cell complex $K$ results in the elimination of orphan vertexes in $K$, {\em i.e.},

\begin{definition}\label{def:triangulation} {\bf Path Triangulation}.\\
A {\bf path triangulation} $h\bigtriangleup^n K$ of a cell complex $K$ results from the introduction of a path between every pair of orphans in the complex.
\qquad \textcolor{blue}{\Squaresteel}
\end{definition}


Orphans are distinguished surface points that eventually cease being orphans as a result of a path triangulation.

\begin{lemma}\label{lemma:pathBasedTriangulation}
Every path triangulation of a cell complex containing 3 or more ophan 0-cells results in a collection of path triangles.
\end{lemma}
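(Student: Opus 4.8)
The plan is to reduce the statement to the minimal configuration of three orphans and then build a genuine path triangle directly from the defining data of the triangulation. First I would fix the orphan 0-cells $v_1,\dots,v_m$ with $m\geq 3$ guaranteed by hypothesis. By Definition~\ref{def:triangulation}, performing the path triangulation $h\bigtriangleup^n K$ introduces a path between every pair of orphans; in particular, for each pair $(v_i,v_j)$ there is a path $h_{ij}:I\to K$ with $h_{ij}(0)=v_i$ and $h_{ij}(1)=v_j$. Combining Definition~\ref{def:orphan} with Definition~\ref{def:triangulation}, once these paths are in place each former orphan is now an end vertex of a path, so no orphan survives the triangulation, and the only structure that remains is the family of inserted paths together with the original cells.

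Next I would select any three orphans $v_i,v_j,v_k$ and examine the three paths $h_{ij},h_{jk},h_{ki}$ that the triangulation has placed among them. These satisfy the overlap relations $h_{ij}(1)=h_{jk}(0)=v_j$, $h_{jk}(1)=h_{ki}(0)=v_k$, and $h_{ki}(1)=h_{ij}(0)=v_i$, so the three paths form a sequence of overlapping paths with no end path. By Definition~\ref{def:pathTriangle} this is exactly a path triangle $h\bigtriangleup v_iv_jv_k$: its boundary $\bdy(h\bigtriangleup v_iv_jv_k)$ is the closed sequence of the three paths, and its interior is nonvoid by Proposition~\ref{prop:hTriangle}, since such a bounding cycle is the target of a cone collapse $D\times I\searrow h\bigtriangleup v_iv_jv_k$. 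By Theorem~\ref{prop:1cycle} the geometric realization of this path triangle is a 1-cycle, confirming that the three edges attach to one another into a simple closed curve rather than leaving a dangling vertex.

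Finally I would assemble the global conclusion: the triangulation $h\bigtriangleup^n K$ is exhausted by the path triangles obtained from triples of orphans, so it is a collection of path triangles. The main obstacle I anticipate is not the combinatorics of the triples but the verification that each boundary cycle bounds a \emph{genuine} path triangle with nonvoid interior in the sense of Definition~\ref{def:pathTriangle}, rather than a degenerate interior-empty cycle; this is precisely where the appeal to the collapse results (Lemma~\ref{lemma:collapses} and Proposition~\ref{prop:hTriangle}) is essential, since they supply the perpendicular cross-cut paths $\ell_t\perp\arc{v_iv_k}$ that fill $\Int(h\bigtriangleup v_iv_jv_k)$. A secondary point to handle with care is the case $m>3$, where inserting a path between every orphan pair produces several triangles; here one checks that adjacent path triangles meet along a common path (shared edge) or a common orphan (shared vertex), matching the definition of a path-based triangulation, so that the resulting family is a legitimate collection of path triangles and not merely an unstructured set of 1-cycles.
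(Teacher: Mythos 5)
Your proof is correct and follows essentially the same route as the paper, which simply declares the lemma immediate from Definition~\ref{def:triangulation}: with at least three orphans, the pairwise paths introduced by the triangulation assemble into triples of overlapping paths with shared endpoints, i.e.\ path triangles. The only superfluous (and logically backwards) step is your appeal to Proposition~\ref{prop:hTriangle} to guarantee a nonvoid interior --- that proposition asserts a cone collapses \emph{to} a path triangle, not that an arbitrary boundary cycle arises from such a collapse, and in any case the remark following Definition~\ref{def:pathTriangle} explicitly permits path triangles with empty interior, so no such verification is required.
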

\begin{proof}
Immediate from Def.~\ref{def:triangulation}.
\end{proof}

\begin{theorem}\label{thm:pathTriangleRealization}{\bf Path Triangulation Geometric Realization}.\\
Every path triangulation of a cell complex has a geometric realization as a collection of 1-cycles.
\end{theorem}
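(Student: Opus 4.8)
The plan is to assemble the statement directly from the two building blocks already in hand: the decomposition of a path triangulation into path triangles (Lemma~\ref{lemma:pathBasedTriangulation}) and the realization of a single path triangle as a 1-cycle (Theorem~\ref{prop:1cycle}). By Definition~\ref{def:triangulation} a path triangulation is obtained by inserting a path between every pair of orphans until the complex is exhausted, and by the notion of a path-based triangulation recorded just before Definition~\ref{def:orphan}, the result is a collection of path triangles whose adjacent members share either a common vertex or a common edge. So the geometric realization of the whole complex should be nothing more than the union of the realizations of its constituent triangles, each of which is already known to be a 1-cycle.

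First I would invoke Lemma~\ref{lemma:pathBasedTriangulation} to write $h\bigtriangleup^n K$ as a finite collection of path triangles $\{h\bigtriangleup E_1,\dots,h\bigtriangleup E_m\}$ (the substantive case being when there are three or more orphan $0$-cells, so that genuine triangles appear). Next, for each index $j$ I would apply Theorem~\ref{prop:1cycle} to obtain a geometric realization $\abs{h\bigtriangleup E_j}$ that is a 1-cycle, that is, a closed sequence of $1$-cells with no end vertex in the sense of Definition~\ref{def:filledCycle}; each edge of this cycle is the realization of one of the bounding paths, which is itself a $1$-cell by Lemma~\ref{lemma:path}. Finally I would set $\abs{h\bigtriangleup^n K} = \bigcup_j \abs{h\bigtriangleup E_j}$ and conclude that the realization of the triangulation is precisely a collection of 1-cycles, one per path triangle.

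The main obstacle, and the only point that is not pure bookkeeping, is verifying that this union is compatible along shared cells, so that the collection genuinely consists of 1-cycles rather than a degenerate glued object. Here I would lean on the adjacency hypothesis built into the definition of a path-based triangulation: two adjacent path triangles meet in either a single shared vertex (a $0$-cell) or a single shared edge (a $1$-cell). In either case the overlap is a face common to both realized cycles, so identifying the two copies of that face introduces no end vertex into any cycle and does not fuse the two cycles into a single open chain; each $\abs{h\bigtriangleup E_j}$ retains its closed structure under the identification. Hence the realization remains a collection of 1-cycles, which is the desired conclusion.
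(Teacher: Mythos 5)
Your proposal follows essentially the same route as the paper's own proof: invoke Lemma~\ref{lemma:pathBasedTriangulation} to decompose the path triangulation into path triangles, then apply Theorem~\ref{prop:1cycle} to realize each as a 1-cycle. Your additional check that the union is compatible along shared vertices or edges is a detail the paper omits entirely, but it only strengthens the same argument rather than changing its substance.
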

\begin{proof}
Let $h\bigtriangleup K$ be a path triangulation of a cell complex $K$. From Lemma~\ref{lemma:pathBasedTriangulation}, $h\bigtriangleup E$ is a collection of path triangles.  Hence, from Theorem~\ref{prop:1cycle}, every path triangle in $h\bigtriangleup K$ has a geometric realization as a 1-cycle, which is the desired result. 
\end{proof}

\begin{lemma}\label{lemma:pathConnected}
Every pair of vertexes in a path triangulation is path-connected.
\end{lemma}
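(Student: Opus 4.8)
The plan is to reduce the claim to two facts: that any two vertices lying in a single path triangle are path-connected, and that the collection of path triangles constituting the triangulation is chained together, so that one can walk from any triangle to any other through triangles that meet in a shared vertex or edge. First I would fix an arbitrary pair of 0-cells $v,v'$ in $h\bigtriangleup^n K$. By Lemma~\ref{lemma:pathBasedTriangulation} the triangulation is a collection of path triangles, and by Definition~\ref{def:triangulation} every orphan has become an endpoint of a path; hence every vertex of the complex, orphan or not, is a vertex of at least one path triangle. So I may choose path triangles $T$ and $T'$ with $v$ a vertex of $T$ and $v'$ a vertex of $T'$.

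The local step is the easy one. By Theorem~\ref{prop:1cycle} each path triangle is geometrically realized as a 1-cycle, and by Definition~\ref{def:filledCycle} the three vertices of a path triangle lie on path-connected edges with no end vertex. Consequently any two of the three vertices of a single path triangle are joined by a subsequence $h_1,\dots,h_k$ of the triangle's boundary paths, which is exactly the condition of Definition~\ref{def:pathConn}, so they are path-connected. Reversing a path $h$ by $t\mapsto h(1-t)$ is again a path, so the orientation of the boundary cycle is not an obstruction to matching the endpoint conventions of Definition~\ref{def:pathConn}.

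The global step is where the work lies. By the definition of a path-based triangulation, adjacent path triangles share a common vertex or a common edge. I would argue that the \emph{adjacency graph} whose nodes are the path triangles of $h\bigtriangleup^n K$ and whose arcs join triangles sharing a vertex or edge is connected: the surface $S$ carrying $K$ is simply connected, hence connected, and the path triangles cover it, so no proper subfamily of triangles can be separated from the rest. Connectedness of this graph yields a chain $T=T_0,T_1,\dots,T_m=T'$ in which each consecutive pair $T_{j-1},T_j$ meets in a shared vertex $w_j$. Concatenating the in-triangle path from $v$ to $w_1$ in $T_0$, from $w_1$ to $w_2$ in $T_1$, and so on up to the path from $w_m$ to $v'$ in $T_m$, produces a single sequence of paths witnessing that $v$ and $v'$ are path-connected in the sense of Definition~\ref{def:pathConn}.

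The main obstacle is the global step, specifically verifying that the triangle-adjacency graph is connected; the local step and the final concatenation are routine once the definitions are unwound. The cleanest justification appeals to connectivity of $S$: if the triangles split into two families sharing no vertex or edge, their geometric realizations would be disjoint closed pieces whose union is $S$, contradicting that $S$ is connected. I would therefore want to record explicitly, as a standing hypothesis carried from Definition~\ref{def:triangulation}, that the triangulation covers a connected complex, so that this separation argument applies and every vertex indeed lies in the common chain.
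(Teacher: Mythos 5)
Your proof is correct in substance but takes a more structured route than the paper's. The paper's own argument is direct and quite terse: from Definition~\ref{def:triangulation} there are no orphans, so some path $h$ starts at $v$; if $h(1)=v'$ we are done, and otherwise one ``follows'' successive paths $h'$ until $v'$ is reached. That argument never explains why the following process must eventually arrive at $v'$ --- it silently assumes exactly the global connectivity that you isolate as the crux. Your decomposition into a local step (any two vertices of one path triangle are joined along its boundary 1-cycle, with path reversal handling orientation) and a global step (connectedness of the triangle-adjacency graph, argued by a separation/contradiction using connectedness of the underlying surface) supplies the missing justification and makes explicit the standing hypothesis that the complex is connected --- without which the statement is actually false for a disconnected complex, a caveat the paper never records. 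What the paper's approach buys is brevity and a direct appeal to the elimination of orphans; what yours buys is an actual termination/reachability argument and a clear identification of where connectedness of $K$ (or of $S$) is used. Both proofs share the same residual informality about why the chain of triangles or chain of paths exists, but yours at least names the needed fact and sketches why it holds, so I would regard it as a strengthening rather than a divergence in conclusion.
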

\begin{proof}
Let $h\bigtriangleup^n K$ be a path-based triangulation of a cell complex $K$ and let $v,v'$ be vertexes in $K$.  From Def.~\ref{def:triangulation}, there are no orphans in $h\bigtriangleup^n K$. Hence, there is a path $h\in h\bigtriangleup^n K$ with $h(0)\assign v$ and $h(1)$ is the beginning of one or more paths $h'$.  If $h(1)\assign v'$, then $h$ is desired path.  Otherwise, we follow path $h'$ until we reach vertex $v'$.  As a result, every pair of vertexes in $h\bigtriangleup^n K$ is path-connected.
\end{proof}

Recall that a {\bf triangulation} $\bigtriangleup^n K$ of a collection of points (sites) in a bounded planar region $K$ is Delaunay (aka Delone) if and only if the circumcircle of none of its triangles contains sites in its interior~\cite{Thrivikraman2012Delaunay}.  Here, path triangulation eliminates orphan points.  Hence, there are no orphans in the interior any path triangle. 

\begin{proposition}\label{prop:triangleInterior}
There are no orphans in the interior of the triangles in a path triangulation.
\end{proposition}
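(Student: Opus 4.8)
The plan is to argue directly from the definition of a path triangulation, treating the statement as the immediate consequence of the fact that triangulation removes every orphan. First I would unwind Definition~\ref{def:orphan}: a vertex is an orphan precisely when it fails to be an end vertex $h(0)$ or $h(1)$ of any path $h$ in the complex. Hence, to show that no orphan sits in the interior of a path triangle, it suffices to show that every vertex lying in $\Int(h\bigtriangleup E)$ is the endpoint of at least one path.

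Next I would invoke Definition~\ref{def:triangulation}: the path triangulation $h\bigtriangleup^n K$ is obtained by inserting a path between every pair of orphans in $K$. Suppose, toward a contradiction, that after this construction some vertex $w$ remains an orphan and lies in $\Int(h\bigtriangleup E)$ for some path triangle $h\bigtriangleup E\in h\bigtriangleup^n K$. Since $w$ is an orphan, the triangulation must have introduced a path having $w$ as an endpoint (a path is placed between $w$ and every other orphan). But an endpoint of a path is, by Definition~\ref{def:orphan}, not an orphan, contradicting the choice of $w$. Alternatively, and more cleanly, one can appeal to Lemma~\ref{lemma:pathConnected}: every pair of vertexes in $h\bigtriangleup^n K$ is path-connected, so each vertex is the start or end of some path and therefore is not an orphan; in particular no interior vertex of any $h\bigtriangleup E$ is an orphan.

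The only point requiring care, and the nearest thing to an obstacle, is geometric rather than logical: one must observe that inserting the connecting path does not merely relabel $w$ but repositions it relative to the triangulation, namely the new path subdivides the region so that $w$ becomes a corner (a $0$-cell shared by adjacent triangles) rather than a point floating in $\Int(h\bigtriangleup E)$. I would make this explicit by noting that, by the adjacency condition defining $h\bigtriangleup^n K$, adjacent path triangles share a common vertex or a common edge, so any vertex produced by the triangulation lies on triangle boundaries rather than in their interiors. Combined with the contradiction above, this yields the claim, paralleling the Delaunay empty-circumcircle property recalled just before the statement.
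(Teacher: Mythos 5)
Your proposal is correct and follows the same route as the paper, whose entire proof is the one-line observation that the claim is an immediate consequence of Definition~\ref{def:triangulation} (every orphan acquires a path endpoint and so ceases to be an orphan). Your elaboration --- the explicit contradiction argument and the geometric remark that the new vertex lands on triangle boundaries rather than in an interior --- simply spells out what the paper leaves implicit.
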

\begin{proof}
This is an immediate consequence of Def.~\ref{def:triangulation}.
\end{proof} 

\begin{remark}
The proposed path triangulation alternative to Delaunay triangulation has a number attractive features, namely,
\begin{compactenum}[1$^o$]
\item For its definition, a path triangle $h\bigtriangleup E$ does not depend on a circumcircle with the absence of sites (orphan vertexes) in its interior, since, from Def.~\ref{def:triangulation}, all orphans are endpoints of paths on edges of path in a path triangulation and, from Prop.~\ref{prop:triangleInterior}, path triangles do not have orphans in their interiors.  
\item Every path triangulation has a free group presentation (see Theorem~\ref{theorem:pathTriangulationGroup}).  Such a free group has simplicity and reflects the path structure of the triangulation.  In a simple extension of Whitehead's homotopy system realization theorem, every free group presentation of a path triangulation has a geometric realization as a collection of triangles (unlike Euclidean triangles, the edges of the triangles covering a complex can be either straight or curved (see, {\em e.g.}, $\bigtriangleup ABC$ in the proof of Prop. 1 in~\cite{EuclidElements300BC} vs. $\mathop{\bigtriangleup}\limits^{\circ} E$ in Fig.~\ref{fig:roundPathTriangle}).
\item Unlike a Delaunay triangle, every path triangle has either a nonvoid or an empty interior. \\
\item Unlike a Zeeman dunce hat triangle or Veech billiard triangle, every point in a path triangle $h\bigtriangleup E$ edge is a known point that is either an endpoint or an interior point in a path that constructs a triangle edge in the path triangulation of distinguished surface orphan points, since we know $h(t)$ for every $t\in I$ in $h:I\to K$.  
\item Every path-based triangulation of a planar cell complex is a good cover of the complex (see Theorem~\ref{thm:pathTriangleGoodCover}). \qquad \textcolor{blue}{\Squaresteel}
\end{compactenum}
\end{remark}

\begin{figure}[!ht]
	\centering
	\begin{pspicture}
	(-4.5,-1.0)(6.5,4.5)
	\psframe[linewidth=0.75pt,linearc=0.25,cornersize=absolute,linecolor=blue](-4.0,-1.0)(6.5,4.4)
	\pscustom[linewidth=0.5pt,linecolor=blue]{%
		\moveto(-3.0,0)
		\curveto(-2,5)(-0.5,2)(-0.0,3.8)
		\moveto(-3.0,0)
		\curveto(-2,5.5)(-0.5,1.5)(-0.0,3.8)
		\moveto(-3.0,0)
		\curveto(-2,6.5)(-0.5,0.5)(-0.0,3.8)
		\moveto(-3.0,0)
		\curveto(-2,8)(-0.5,-1)(-0.0,3.8)
	}
		\pscustom[linewidth=0.5pt,linecolor=blue]{%
		\moveto(-0.0,3.8)
		\curveto(2,2)(0.5,1)(0,1.0)
		\moveto(-0.0,3.8)
		\curveto(2,2.5)(0.5,2.5)(0,1.0)
		\moveto(-0.0,3.8)
		\curveto(2,3)(0.5,3)(0,1.0)
		\moveto(-0.0,3.8)
		\curveto(2,4)(0.5,3.5)(0,1.0)
	}
	\pscustom[linewidth=0.5pt,linecolor=blue]{%
		\moveto(-3.0,0)
		\curveto(-2.0,2)(-1.5,0)(0,1.0)
		\moveto(-3.0,0)
		\curveto(-2.0,2.5)(-1.5,-0.5)(0,1.0)
		\moveto(-3.0,0)
		\curveto(-2.0,3)(-1.5,-1)(0,1.0)
		\moveto(-3.0,0)
		\curveto(-2.0,4)(-1.5,-2)(0,1.0)
	}
	\psellipse[linecolor=red,linestyle=dotted,linewidth=1.2pt]%
	(1.0,2.5)(0.90,0.25)
	\psellipse[linecolor=red,linestyle=dotted,linewidth=1.2pt]%
	(-2.5,1.8)(0.80,0.25)
	\psellipse[linecolor=red,linestyle=dotted,linewidth=1.2pt]%
	(-0.3,0.5)(0.80,0.55)
\psdots[dotstyle=o,dotsize=2.5pt,linewidth=1.5pt,linecolor=black,fillcolor=red!80]
	(-3.0,0)(-0.0,3.8)(0.0,1)
	\rput(-3.5,4.1){ $\boldsymbol{K}$}
	\rput(-2.0,-0.2){\footnotesize {\color{black} $\boldsymbol{h_i(0)=\ell_i(1)\assign v_1}$}}
	\rput(0,4.0){\footnotesize {\color{black} $\boldsymbol{h_i(1)=k_i(0)\assign v_2}$}}
	\rput(1.0,1.0){\footnotesize {\color{black} $\boldsymbol{k_i(1)=}$}}
	\rput(1.0,0.7){\footnotesize {\color{black} $\boldsymbol{\ell_i(0)\assign}$}}
	\rput(1.0,0.4){\footnotesize {\color{black} $\boldsymbol{v_3}$}}
	\rput(0.25,2.5){\footnotesize $\boldsymbol{[k]}$}
	\rput(-2.9,1.8){\footnotesize $\boldsymbol{[h]}$}
	\rput(0.0,0.5){\footnotesize $\boldsymbol{[\ell]}$}
	\rput(1.5,2.0){\large  $\boldsymbol{\mapsto}$}
	\pscustom[linewidth=0.5pt,linecolor=black]{%
		\moveto(2.0,0)
		\curveto(2,5)(4.5,2)(5.0,3.8)
		\moveto(2.0,0)
		\curveto(2,5.5)(4.5,1.5)(5.0,3.8)
		\moveto(2.0,0)
		\curveto(2,6.5)(4.5,0.5)(5.0,3.8)
		\moveto(2.0,0)
		\curveto(2,8)(4.5,-1)(5.0,3.8)
	}
		\pscustom[linewidth=0.5pt,linecolor=black]{%
		\moveto(5.0,3.8)
		\curveto(7,2)(5.5,1)(5,1.0)
		\moveto(5.0,3.8)
		\curveto(7,2.5)(5.5,2.5)(5,1.0)
		\moveto(5.0,3.8)
		\curveto(7,3)(5.5,3)(5,1.0)
		\moveto(5.0,3.8)
		\curveto(7,4)(5.5,3.5)(5,1.0)
	}
	\pscustom[linewidth=0.5pt,linecolor=black]{%
		\moveto(2.0,0)
		\curveto(3.0,2)(3.5,0)(5,1.0)
		\moveto(2.0,0)
		\curveto(3.0,2.5)(3.5,-0.5)(5,1.0)
		\moveto(2.0,0)
		\curveto(3.0,3)(3.5,-1)(5,1.0)
		\moveto(2.0,0)
		\curveto(3.0,4)(3.5,-2)(5,1.0)
	}
\psdots[dotstyle=o,dotsize=2.5pt,linewidth=1.5pt,linecolor=black,fillcolor=red!80]
(2.0,0)(5.0,3.8)(5.0,1)
\rput(-1.0,2.0){{\color{black} $\boldsymbol{[h]\bigtriangleup E}$}}
\rput(4.0,2.0){{\color{black} $\boldsymbol{\abs{[h]\bigtriangleup E}}$}}
\rput(2.0,-0.2){\footnotesize {\color{black} $\boldsymbol{v_1}$}}
\rput(5.0,4.0){\footnotesize {\color{black} $\boldsymbol{v_2}$}}
\rput(5.2,0.8){\footnotesize {\color{black} $\boldsymbol{v_3}$}}
	\end{pspicture}
\caption[]{$\boldsymbol{[h]\bigtriangleup E}$ (Curviliner path class triangle) $\mapsto$ $\boldsymbol{\abs{[h]\bigtriangleup E}}$ (geometrically realized as curviliner, overlapping triangles)}
\label{fig:pathClassTriangles}
\end{figure}

\section{Path Class Triangles}
This section introduces path class triangles.

\begin{definition}\label{def:pathClass} {\bf Path-Class}.\\
A {\bf path-class} $h$ (denoted by $\boldsymbol{[h]}$) is a collection of paths $h$ that have the same initial and final values, i.e., if path $h_i,h_j\in [h]$, then
$h_i(0) = h_j(0)$ and $h_i(1) = h_j(1)$.
\qquad \textcolor{blue}{\Squaresteel}
\end{definition}

\begin{definition}\label{def:pathClassTriangle} {\bf Path-Class Triangle}.\\
A {\bf path-class triangle} $\boldsymbol{[h]\bigtriangleup E}$ is a collection of three path classes that pairwise have the same initial and final values.
\qquad \textcolor{blue}{\Squaresteel}
\end{definition}

\begin{example}\label{ex:pathClassTriangle} {\bf Sample Path-Class Triangle}.\\
A sample {\bf path-class triangle} $\boldsymbol{[h]\bigtriangleup E = \left\{[h],[k],[\ell]\right\}}$ is shown in Fig.~\ref{fig:pathClassTriangles}, {\em i.e.}, we have
\begin{align*}
h_i & \in [h], k_i\in [k], \ell_i\in [\ell], i\in \left\{1,2,3,4\right\}.\\
h_i(0) &= \ell_i(1)\assign\ \mbox{0-cell}\ v_1,\\
h_i(1) &= k_i(0)\assign\ \mbox{0-cell}\ v_2,\\
k_i(1) &= \ell_i(0)\assign\ \mbox{0-cell}\ v_3.
\end{align*}
\qquad \textcolor{blue}{\Squaresteel}
\end{example}

\begin{definition}\label{def:filledNervComplex} {\rm \bf Nerve Complex}.\\
A nerve complex $\Nrv E$ in a CW space is a collection of nonempty cell complexes with nonvoid intersection.  This is an example of an Alexandrov nerve~\cite[\S 4.3,p. 39]{Alexandroff1932elementaryConcepts}.
\textcolor{blue}{\Squaresteel}
\end{definition}

A vertex with a collection of triangles attached to it, is called the {\bf nucleus} of the attached triangles (also called the nucleus of an Alexandrov-Hopf nerve complex or {\bf Nerv}~\cite{AlexandroffHopf1935Topologie}).

\begin{lemma}\label{lemma:AlexandrovNerv}
Every vertex in the triangulation of the vertices in a CW space is the nucleus of an Alexandrov-Hopf nerve complex in which one more triangles are attached to each vertex. 
\end{lemma}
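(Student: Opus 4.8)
The plan is to fix an arbitrary vertex $v$ in the path triangulation $h\bigtriangleup^n K$ and exhibit, around $v$, a family of path triangles whose common intersection is nonvoid, so that the family is a nerve complex in the sense of Def.~\ref{def:filledNervComplex} and $v$ is its nucleus. Since the assertion has two parts --- (i) at least one triangle is attached to $v$, and (ii) the attached triangles together form an Alexandrov-Hopf nerve --- I would prove them in that order.

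First I would establish incidence. By Def.~\ref{def:triangulation}, a path triangulation introduces a path between every pair of orphans, so once $h\bigtriangleup^n K$ is formed $v$ is no longer an orphan; hence $v$ is an endpoint of at least one path $h\in h\bigtriangleup^n K$. By Lemma~\ref{lemma:pathBasedTriangulation} the triangulation is a collection of path triangles, and by Lemma~\ref{lemma:pathConnected} every pair of vertexes is path-connected. Consequently the paths emanating from $v$ close up into the bounding 1-cycles of path triangles (Theorem~\ref{prop:1cycle}), so $v$ is a 0-cell on $\bdy(h\bigtriangleup E)$ for one or more path triangles $h\bigtriangleup E$. This delivers the family $\mathcal{T}_v = \{h\bigtriangleup E : v\in \bdy(h\bigtriangleup E)\}$, which is nonempty.

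Next I would verify the nerve property. Each $h\bigtriangleup E\in\mathcal{T}_v$ is a nonempty cell complex containing $v$, so $v\in \bigcap_{h\bigtriangleup E\in \mathcal{T}_v} h\bigtriangleup E$, whence the intersection is nonvoid. By Def.~\ref{def:filledNervComplex} the collection $\mathcal{T}_v$ is therefore a nerve complex, and since every triangle in $\mathcal{T}_v$ is attached to $v$, the vertex $v$ is by definition the nucleus of this Alexandrov-Hopf nerve. Because $v$ was arbitrary, the conclusion holds for every vertex of the triangulation.

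The main obstacle is step (i): arguing rigorously that each non-orphan vertex actually lies on the boundary of a \emph{completed} path triangle rather than merely on a dangling path. I expect to close this gap by combining Prop.~\ref{prop:triangleInterior} (no orphans survive in the interior of any triangle) with Lemma~\ref{lemma:pathConnected}, so that any path leaving $v$ can be continued through path-connected vertexes until it returns, forming the 1-cycle that bounds a path triangle incident to $v$; once incidence is secured, part (ii) is an immediate application of the definition of a nerve complex.
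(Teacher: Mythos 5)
The paper states Lemma~\ref{lemma:AlexandrovNerv} with no proof at all --- the only thing that follows it is Theorem~\ref{thm:Nerv}, whose proof simply declares the theorem ``immediate'' from the lemma --- so there is no argument of the paper's to measure yours against; what you have written is, in effect, the missing proof. Your route is the natural one: collect the path triangles incident to a fixed vertex $v$ into a family $\mathcal{T}_v$, note that $v$ lies in every member so the common intersection is nonvoid, and then invoke Def.~\ref{def:filledNervComplex} together with the paper's definition of nucleus. Part (ii) of your argument is correct and essentially forced by the definitions.

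The one load-bearing step is the one you flag yourself: showing $\mathcal{T}_v\neq\emptyset$. Neither Def.~\ref{def:triangulation} nor Lemma~\ref{lemma:pathBasedTriangulation} literally delivers this --- the former only promises a path between every pair of orphans, and the latter only says the triangulation ``results in a collection of path triangles'' when there are at least three orphan 0-cells, without asserting that every vertex lies on the boundary of one of them; your appeal to Lemma~\ref{lemma:pathConnected} does not by itself exclude a vertex whose incident paths never close up around a 2-cell. Two cleaner patches are available within the paper's own framework: (a) since Def.~\ref{def:triangulation} introduces a path between \emph{every} pair of orphans, any former orphan $v$ together with two other former orphans $u,w$ is joined by three pairwise paths forming a path triangle containing $v$ (this needs the standing hypothesis of at least three orphans, as in Lemma~\ref{lemma:pathBasedTriangulation}); or (b) cite Theorem~\ref{thm:pathTriangulationCycles} (stated later but logically independent of this lemma), which places every vertex on a path cycle and hence, via Theorem~\ref{prop:1cycle}, on the boundary of a path triangle. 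With either repair the incidence step is secured and your proof goes through; Prop.~\ref{prop:triangleInterior} is not actually needed for this purpose.
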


\begin{theorem}\label{thm:Nerv}
A path triangulation of a cell complex containing $n$ vertexes contains $n$ Alexandrov-Hopf nerve complexes. 
\end{theorem}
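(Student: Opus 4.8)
The plan is to exhibit a one-to-one correspondence between the $0$-cells of the triangulation and its Alexandrov--Hopf nerve complexes, and then simply count. Write $h\bigtriangleup^n K$ for the path triangulation and let $v_1,\dots,v_n$ be its $n$ vertexes. To each vertex $v_i$ I would attach the family $\Nrv(v_i)$ of all path triangles $h\bigtriangleup E\in h\bigtriangleup^n K$ that contain $v_i$. By Lemma~\ref{lemma:pathConnected} every vertex is path-connected, so some path (hence, by Lemma~\ref{lemma:pathBasedTriangulation}, some path triangle) is attached to $v_i$ and $\Nrv(v_i)$ is nonempty; since every member of $\Nrv(v_i)$ contains $v_i$, we have $v_i\in\bigcap\Nrv(v_i)$, so the common intersection is nonvoid. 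By Def.~\ref{def:filledNervComplex} this makes $\Nrv(v_i)$ a nerve complex with nucleus $v_i$, which is precisely what Lemma~\ref{lemma:AlexandrovNerv} asserts.

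Next I would argue that $v_i\mapsto\Nrv(v_i)$ is a bijection onto the Alexandrov--Hopf nerve complexes of $h\bigtriangleup^n K$. Surjectivity is built into the definition: an Alexandrov--Hopf nerve complex is exactly the collection of triangles attached to some nucleus vertex $v_j$, so it equals $\Nrv(v_j)$. For the reverse direction I would use that each such complex is pointed at its nucleus, so the nerve complex carried by $v_i$ and the one carried by $v_j$ are distinct objects whenever $v_i\neq v_j$. With injectivity and surjectivity in hand, the number of Alexandrov--Hopf nerve complexes equals the number of vertexes, namely $n$.

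The main obstacle is that distinct vertexes need not carry distinct underlying \emph{sets} of triangles: in the extreme case of a single path triangle $h\bigtriangleup v_1v_2v_3$, all three vertexes are incident to the same lone triangle, so $\Nrv(v_1),\Nrv(v_2),\Nrv(v_3)$ share the same triangle-family. The count of $n$ is therefore correct only if a nerve complex is regarded as \emph{pointed at its nucleus} (Lemma~\ref{lemma:AlexandrovNerv} indexes one nerve complex per nucleus), rather than being identified with the bare set of its triangles. Making this convention explicit --- that the nucleus is part of the data of an Alexandrov--Hopf nerve complex --- is the key step; once it is adopted, the vertexes and the nerve complexes are in evident bijection and the counting is immediate.
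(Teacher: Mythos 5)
Your proposal is correct and takes essentially the same route as the paper, whose entire proof reads ``Immediate from Lemma~\ref{lemma:AlexandrovNerv}''---that is, each vertex is the nucleus of an Alexandrov--Hopf nerve complex, and one counts nuclei. Your explicit vertex-to-nerve-complex bijection, and in particular your observation that the count of $n$ only holds if a nerve complex is regarded as \emph{pointed} at its nucleus (since, e.g., the three vertexes of a lone path triangle all carry the same underlying family of triangles), supplies a counting detail and a convention that the paper leaves entirely implicit.
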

\begin{proof}
Immediate from Lemma~\ref{lemma:AlexandrovNerv}.
\end{proof}

\begin{definition}\label{def:pathClassTriangulation} {\bf Path Class Triangulation}.\\
A {\bf path class triangulation} $[h]\bigtriangleup^n E$ of a cell complex $K$ results from the introduction of a path class between one or more pairs of neighboring orphans in the complex.
\end{definition}

\begin{proposition}\label{prop:NervComplex}
Every path class triangulation of a cell complex is a collection of Alexandrov-Hopf nerve complexes.
\end{proposition}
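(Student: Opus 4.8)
The plan is to reduce this statement to Lemma~\ref{lemma:AlexandrovNerv}, exactly as was done for the ordinary path triangulation in Theorem~\ref{thm:Nerv}. First I would unpack Definition~\ref{def:pathClassTriangulation}: a path class triangulation $[h]\bigtriangleup^n K$ is assembled from path class triangles $[h]\bigtriangleup E$, and by Definition~\ref{def:pathClassTriangle} each such triangle is a collection of three path classes whose endpoints agree pairwise, so it carries three well-defined vertices $v_1,v_2,v_3$. Because the triangulation is produced by inserting a path class between neighboring orphans, every vertex of $K$ occurs as a shared endpoint of one or more of these path class triangles.

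Next I would fix an arbitrary vertex $v$ of $[h]\bigtriangleup^n K$ and form the subcollection of all path class triangles $[h]\bigtriangleup E$ having $v$ among their vertices. Each such triangle is a nonempty cell complex, and all of them contain the common point $v$, so their intersection is nonvoid. By Definition~\ref{def:filledNervComplex}, this subcollection is therefore a nerve complex with nucleus $v$, which is precisely the content of Lemma~\ref{lemma:AlexandrovNerv} applied to $v$: that lemma guarantees that $v$ is the nucleus of an Alexandrov-Hopf nerve complex in which one or more triangles are attached to $v$.

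Finally, I would let $v$ range over all vertices of the triangulation. Each vertex yields one Alexandrov-Hopf nerve complex, and every path class triangle is accounted for by the nerve complex at each of its three vertices, so the union of these nerve complexes exhausts the entire path class triangulation. Hence $[h]\bigtriangleup^n K$ is a collection of Alexandrov-Hopf nerve complexes, as claimed.

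I expect the only genuine subtlety to sit in the middle step, namely checking that the nonvoid-intersection requirement of Definition~\ref{def:filledNervComplex} survives the passage from ordinary path triangles to path class triangles. The hard part is really just confirming this, and it causes no difficulty: any two path class triangles meeting at $v$ overlap in at least the singleton containing $v$, since $v$ is a shared endpoint of the path classes constituting their edges. The path-class refinement merely replaces single edges by curvilinear families of paths with the same endpoints and does not disturb the shared vertex that serves as the nucleus, so the argument of Lemma~\ref{lemma:AlexandrovNerv} transfers verbatim.
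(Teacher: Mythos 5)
Your proposal is correct and follows essentially the same route as the paper: both arguments reduce the statement to Lemma~\ref{lemma:AlexandrovNerv} and then verify, via Definition~\ref{def:pathClassTriangle}, that path class triangles sharing a vertex have nonvoid intersection at that vertex, which serves as the nucleus. Your version merely makes explicit the step of ranging over all vertices and checking that every path class triangle is accounted for, which the paper leaves implicit.
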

\begin{proof}
Let $h\bigtriangleup^n K$ be a path class triangulation of a complex $K$ and let $[h]\bigtriangleup E$ be a path class triangle in $h\bigtriangleup^n K$.  From Lemma~\ref{lemma:AlexandrovNerv}, every vertex in $K$ is the nucleus of an Alexandrov-Hopf nerve complex.  From Def.~\ref{def:pathClassTriangle}, every pair of path classes in $[h]\bigtriangleup E$ with a common orphan vertex have nonvoid intersection equaling the nucleus of an Alexandrov-Hopf nerve complex.  Hence, the desired result follows.
\end{proof}

\begin{proposition}\label{prop:hybridPathTriangulation}
Every path class triangulation of a cell complex is a path triangulation.
\end{proposition}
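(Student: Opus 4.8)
The plan is to reduce a path-class triangulation to a path triangulation by passing from each path class to a single representative path. The key observation is that, by Definition~\ref{def:pathClass}, a path class $[h]$ is a (nonempty) collection of paths all sharing the same initial and final values; in particular each class $[h]$ contains at least one representative path $h\in[h]$. Selecting such a representative from every path class in the given path-class triangulation will produce the underlying path triangulation.

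First I would fix an arbitrary path-class triangle $[h]\bigtriangleup E=\left\{[h],[k],[\ell]\right\}$ of the path-class triangulation (as in Example~\ref{ex:pathClassTriangle}) and choose one representative path from each of its three classes, say $h\in[h]$, $k\in[k]$, $\ell\in[\ell]$. Since the three classes pairwise agree on their endpoints at the vertices $v_1,v_2,v_3$ by Definition~\ref{def:pathClassTriangle}, the chosen representatives inherit these shared endpoints, so they form a sequence of three overlapping paths with no end path meeting pairwise at $v_1,v_2,v_3$. By Definition~\ref{def:pathTriangle} this sequence is exactly a path triangle $h\bigtriangleup E$.

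Next I would apply this representative selection uniformly across every path-class triangle comprising the path-class triangulation. The resulting collection of representative paths introduces a path between each pair of orphans that the path-class triangulation joined by a path class, so it removes the orphan status of those vertices and yields a collection of path triangles. By Definition~\ref{def:triangulation} this collection is a path triangulation of $K$, which is the desired conclusion.

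The step requiring the most care is reconciling the scope of the two definitions: Definition~\ref{def:pathClassTriangulation} introduces a path class between ``one or more pairs of neighboring orphans,'' whereas Definition~\ref{def:triangulation} speaks of introducing a path between pairs of orphans. I would handle this by noting that, per Proposition~\ref{prop:NervComplex}, the path-class triangulation already covers $K$ with path-class triangles organized as Alexandrov-Hopf nerve complexes; the representative paths inherit precisely this covering, so the orphans eliminated by the representative path triangulation are exactly those eliminated by the path-class triangulation. Hence the representative paths satisfy the orphan-elimination requirement of Definition~\ref{def:triangulation}, and no orphan remaining in the path-class triangulation can persist in its representative path triangulation.
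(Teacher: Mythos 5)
Your core argument is the same as the paper's: the paper's entire proof is the one-line observation that ``every path class contains one or more paths between neighboring orphans as endpoints,'' which is precisely your representative-selection step. Up through your second paragraph you are simply giving a more explicit version of that argument, and it is fine.

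The problem is your final paragraph. You correctly spot the real issue --- Definition~\ref{def:pathClassTriangulation} only introduces path classes between ``one or more pairs of neighboring orphans,'' while Definition~\ref{def:triangulation} demands a path between \emph{every} pair of orphans --- but your attempted repair does not work. Proposition~\ref{prop:NervComplex} says only that a path class triangulation is a collection of Alexandrov--Hopf nerve complexes; it says nothing about which pairs of orphans receive a connecting path class, so it cannot certify that the representative paths eliminate every orphan pair required by Definition~\ref{def:triangulation}. On a literal reading of the two definitions, a path class triangulation that joins only \emph{some} neighboring orphan pairs would fail to be a path triangulation, and no choice of representatives can manufacture the missing paths. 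To be fair, the paper's own proof silently glosses over exactly this mismatch, so the gap is inherited from the source rather than introduced by you; but if you want a complete argument you must either strengthen the hypothesis (require a path class between every pair of neighboring orphans) or argue directly that the quantifier discrepancy is immaterial, rather than citing Proposition~\ref{prop:NervComplex}.
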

\begin{proof}
Since every path class contains one or more paths between neighboring orphans as endpoints, the desired result follows.
\end{proof}

\section{Path Cycles}
This section briefly introduces path cycles that have geometric realization as 1-cycles.

\begin{definition}\label{def:filledHomotopicCycle} {\rm \bf  Path Cycle~\cite[App. A]{PetersVergili2021homotopicCycles}}.\\
	A {\bf path cycle} $\hcyc E$ in a CW space $K$ is a sequence of continuous maps $\left\{h_i\right\}_{i=0}^{(n-1)[n]}$, $h_i:I\to K,  [n] =\ \mbox{mod}\ n$, with 
	\begin{align*}
	h_0(0) &= h_{n[n]}(0) = v_0\in K, h_{n-1[n]}(1)= v_{n-1[n]}\in K.\\
	\mbox{Given}\ h_i &\in \left\{h_i\right\}_{i=0}^{(n-1)[n]},\\ 
	g &= h_i(0),\ \mbox{a basis element in $\hcyc E$,  so that}\\
	v &\overbrace{\assign kg =kh_i(0)=h_{i+k}(0)\in K, k\in \mathbb{Z}.}^{\mbox{\textcolor{blue}{\bf $h_{i+k}(0)$\ \mbox{walks forward}\ $k$ vertexes\ \mbox{from $g$ to reach}\ $h_{i+k}(0)$}}}\\
	\bar{h}_{i+k}(0)&\overbrace{\assign h_{(i+k)-k}(0) = h_i(0).}^{\mbox{\textcolor{blue}{\bf $\bar{h}_{i+k}(0)$\ \mbox{walks back}\ $k$ vertexes from $h_{i+k}(0)$ to reach $h_i(0)$}}}\\
	+:K\times K&\to K,\ \mbox{is defined by}\\
	+(v,v') &= \overbrace{+(kg,k'g) = kg+k'g.}^{\mbox{\textcolor{blue}{\bf move to $\boldsymbol{v''}$ via $\boldsymbol{h_i(0)=v,h_{i+k} (v')}$}}}\\
	&= kh_i(0)+k'h_{i+k}(0) = h_{(i+k+k')[n]}(0) = v''\in K.\mbox{\qquad \textcolor{blue}{\Squaresteel}}
	\end{align*}
\end{definition}

%

\begin{theorem}\label{thm:pathTriangulationCycles}
Every path triangulation is collection of path cycles.
\end{theorem}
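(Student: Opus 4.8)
The plan is to show that every path triangulation decomposes into path cycles by exhibiting each path triangle as a path cycle and then arguing that the full triangulation is nothing more than the aggregate of these cyclic structures. First I would invoke Lemma~\ref{lemma:pathBasedTriangulation}, which guarantees that a path triangulation $h\bigtriangleup^n K$ is a collection of path triangles $h\bigtriangleup E$. The reduction therefore rests on identifying each individual path triangle with a path cycle in the sense of Definition~\ref{def:filledHomotopicCycle}.

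Next I would unfold the boundary structure of a single path triangle. By Definition~\ref{def:pathTriangle}, the boundary $\bdy(h\bigtriangleup E)$ is a sequence of three overlapping paths $h_1,h_2,h_3$ with no end path, satisfying the closure relations $h_1(1)=h_2(0)$, $h_2(1)=h_3(0)$, and $h_3(1)=h_1(0)$. I would match this directly against the path cycle conditions of Definition~\ref{def:filledHomotopicCycle}: taking $n=3$, set $v_0=h_1(0)$, and verify that the three maps $h_0=h_1$, $h_1=h_2$, $h_2=h_3$ form a sequence $\left\{h_i\right\}_{i=0}^{(n-1)[n]}$ closing up modulo $3$, so that $h_0(0)=h_{3[3]}(0)=v_0$. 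This exhibits each path triangle as a path cycle of length three, and the move operation $+$ is inherited from the vertex-walking structure already present on the triangle edges by Lemma~\ref{lemma:pathConnected}.

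Finally I would assemble the global statement. Since a path triangulation is, by Lemma~\ref{lemma:pathBasedTriangulation}, a collection of path triangles, and since each such triangle is a path cycle by the matching just described, the triangulation is a collection of path cycles. I would note that the vertex- and edge-sharing required of a path-based triangulation is consistent with overlapping path cycles, so no structural obstruction arises when adjacent triangles meet.

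The main obstacle I anticipate is the faithful translation between the two definitions: Definition~\ref{def:filledHomotopicCycle} carries an algebraic move operation $+$ and an index-shifting convention (forward and backward walking by $k$ vertexes) that is heavier than the purely geometric closure condition of Definition~\ref{def:pathTriangle}. The delicate step is confirming that a three-path boundary genuinely satisfies the modular indexing $h_{(i+k+k')[n]}(0)$ and that the generator $g=h_i(0)$ behaves as the basis element of the cyclic free group structure; establishing this compatibility, rather than the collection-level bookkeeping, is where the argument must be made carefully.
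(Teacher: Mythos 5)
Your proof is correct, but it takes a genuinely different route from the paper's. You argue \emph{locally}: each path triangle is itself a path cycle of length three, obtained by matching the closure relations $h_1(1)=h_2(0)$, $h_2(1)=h_3(0)$, $h_3(1)=h_1(0)$ of the boundary $\bdy(h\bigtriangleup E)$ against Definition~\ref{def:filledHomotopicCycle} with $n=3$, and then the triangulation is a collection of path cycles simply because it is a collection of path triangles. The paper instead argues \emph{globally}: it invokes Lemma~\ref{lemma:pathConnected} to get path-connectedness of all vertex pairs, picks a vertex $v$, follows successive paths $h(1)=h'(0)$ through the triangulation until the walk returns to $v$, and concludes that every vertex lies on some path cycle. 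Your version is tighter and more self-contained --- it produces explicit cycles (the triangle boundaries) rather than asserting the existence of a returning walk, and it does not need the path-connectedness lemma at all. The paper's version, on the other hand, yields cycles that may traverse several triangles, which is closer in spirit to the later free-group presentation where a single generator vertex reaches the whole complex. Your flagged concern about the move operation $+$ and the modular indexing in Definition~\ref{def:filledHomotopicCycle} is well placed: the paper's own proof simply cites that definition without checking the algebraic structure either, so you are if anything more careful; for a three-path boundary the indexing $h_{(i+k+k')[3]}(0)$ is immediate once the three endpoints are identified cyclically, so the compatibility you worry about does hold.
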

\begin{proof}
Let $h\bigtriangleup^n K$ (briefly, $K$) be a path triangulation of a cell complex $K$.  From Def.~\ref{def:triangulation}, $K$ is a collection of path triangles $h\bigtriangleup E\in K$. In a triangulation containing more than one path triangle, every vertex $v\in h\bigtriangleup E$ is the beginning of a another path $h'$, {\em i.e.}, $h(1) = h'(0)\assign v$ in a triangle $h'\bigtriangleup E'$.  From Lemma~\ref{lemma:pathConnected}, every pair of vertexes in $K$ is path-connected.
By following the path that begins with vertex $v$, we eventually arrive at a path that takes us back to $v$.  Hence, every vertex in the triangulation belongs to a path cycle and, from Def.~\ref{def:filledHomotopicCycle}, $K$ is a collection of path cycles.
\end{proof}

\section{Free Group Presentation of a Path Triangulation}
This section introduces free group presentation of a path triangulation of a cell complex.

\begin{definition}\label{RotmanPresentation}{{\bf Rotman Presentation}\rm \cite[p.239]{Rotman1965TheoryOfGroups}}$\mbox{}$\\
Let $X = \left\{g_1,\dots\right\},\bigtriangleup = \left\{v = \sum kg_i,v\in \mbox{group} G,g_i\in X\right\}$ be a set of generators of members of a nonempty set $X$ and set of relations between members of $G$ and the generators in $X$.  A mapping of the form $\left\{X,\bigtriangleup\right\}\to G$, a free group, is called a presentation of $G$. \qquad\textcolor{blue}{\Squaresteel}
\end{definition}

We write $G(V,+)$ to denote a group $G$ on a nonvoid set $V$ with a binary operation $+$.  For a group $G(V,+)$ presentable as a collection of linear combinations of members of a basis set $\mathcal{B}\subseteq V$, we write $G(\mathcal{B},+)$.

\begin{definition}\label{def:cellComplexFreeGroup}{\bf Free Group Presentation of a Cell Complex}.$\mbox{}$\\
Let $2^K$ be the collection of cell complexes in a CW space $K$, $E\in 2^K$ containing $n$ vertexes, $G(E,+)$ a group on nonvoid set $E$ with binary operation $+$, $\bigtriangleup = \left\{v = \sum kg_i,v\in E,g_i\in E\right\}$ be a set of generators of members in $E$, set of relations between members of $E$ and the generators $\mathcal{B}\subset E$, $g_i\in\mathcal{B},v = h_{i[n]}(0)\in K$, $k_i$ the $i^{th}$ integer coefficient $[n]$ in a linear combination $\mathop{\sum}\limits_{i,j}k_ig_j$ of generating elements $g_j = h_j(0)\in \mathcal{B}$.   A free group {\bf presentation} of $G$ is a continuous map $f:2^K\times\bigtriangleup\to 2^K$ defined by
\begin{align*}
f(\mathcal{B},\bigtriangleup) &= \left\{v \assign{\mathop{\sum}\limits_{i,j}k_ig_j\in\bigtriangleup}: v\in E,g_j\in \mathcal{B}, k_i\in\mathbb{Z}\right\}\\
          &= \overbrace{\boldsymbol{G(\left\{ g_1,\dots,g_{_{\abs{\tiny \mathcal{B}}}}\right\}},+).}^{\mbox{\textcolor{blue}{\bf $\mathcal{B}\times\bigtriangleup\mapsto$ free group $\boldsymbol{G}$ presentation of $G(E,+)$}}}\mbox{\qquad\textcolor{blue}{\Squaresteel}}
\end{align*}
\end{definition}

\begin{lemma}\label{lemma:1cycleFreeGroup}~\cite{PetersVergili2021homotopicCycles}
Every 1-cycle in a CW space $K$ has a free group presentation.
\end{lemma}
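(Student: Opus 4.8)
The plan is to exhibit an explicit free group presentation for an arbitrary 1-cycle in a CW space $K$ and verify it satisfies Definition~\ref{def:cellComplexFreeGroup}. By Definition~\ref{def:filledCycle}, a 1-cycle $\cyc E$ is a collection of path-connected $0$-cells $v_0,\dots,v_{n-1}$ lying on $1$-cells (edges) attached end-to-end with no end vertex, so that traversing the edges returns to the starting vertex. The key structural fact I would invoke is the cyclic, end-vertex-free nature of the cycle: this lets me coordinatize the vertices by a single generator together with the ``walk forward $k$ vertexes'' operation of Definition~\ref{def:filledHomotopicCycle}.

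First I would select one $0$-cell as a distinguished generator $g \assign v_0 = h_0(0)$. Using the path-connectedness of the cycle and the move operation $+$ from Definition~\ref{def:filledHomotopicCycle}, I would show that every vertex $v$ in $\cyc E$ is reachable as $v \assign kg = h_k(0)$ for some $k\in\mathbb{Z}$, by walking forward $k$ edges along the cycle. This produces the generating set $\mathcal{B} = \{g\}$ and the relation set $\bigtriangleup = \{v = kg : v\in E,\ k\in\mathbb{Z}\}$. Second, I would verify that the pair $(\mathcal{B},\bigtriangleup)$ is exactly a presentation in the sense of Definition~\ref{RotmanPresentation} and Definition~\ref{def:cellComplexFreeGroup}, namely that the map $f(\mathcal{B},\bigtriangleup)$ returns a group $G(\{g\},+)$, with the binary operation $+(kg,k'g) = (k+k')g$ inherited from concatenating walks along the cycle. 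Since there is a single generator and the forward/backward walks of Definition~\ref{def:filledHomotopicCycle} supply the inverse, the resulting group is the free group on one generator.

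The main obstacle I anticipate is handling the cyclic identification cleanly: walking forward $n$ vertexes returns to the start, so naively one has the relation $ng = g$, which would make the group finite cyclic rather than free. To reconcile this with the claimed \emph{free} group presentation, I would lean on the convention already built into Definition~\ref{def:filledHomotopicCycle}, where indices are taken mod $n$ (written $[n]$) at the level of the geometric cycle but the coefficients $k\in\mathbb{Z}$ range over all integers in the algebraic presentation. The correct reading is that the presentation is free precisely because the move operation records the \emph{total} signed number of forward steps as an integer, and the geometric wrap-around is a property of the realization $\abs{\cyc E}$, not a relation imposed on $G$. I would make this distinction explicit, arguing that the generator $g$ satisfies no nontrivial relation in the presentation, so $f(\{g\},\bigtriangleup) \cong G(\{g\},+)$ is free on one generator.

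Finally, I would close by noting that the construction used nothing beyond a single chosen generator and the cyclic path structure, so it applies to every $1$-cycle in $K$ uniformly, giving the desired result. Thus every $1$-cycle in a CW space $K$ has a free group presentation.
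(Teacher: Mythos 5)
The paper itself offers no proof of Lemma~\ref{lemma:1cycleFreeGroup}: it is imported by citation from an external reference, so there is no in-paper argument to compare yours against. Judged on its own terms, your construction (choose one $0$-cell $g$ as generator, coordinatize every vertex of the cycle as $kg$ via the walk-forward move operation, and read off $\mathcal{B}=\{g\}$ with relations $v=kg$) is exactly the construction the surrounding machinery of Definitions~\ref{RotmanPresentation}, \ref{def:cellComplexFreeGroup} and~\ref{def:filledHomotopicCycle} is set up to support, and it is the argument one would expect the cited source to give.

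The genuine soft spot is the one you flagged yourself and then resolved too quickly. Definition~\ref{def:filledHomotopicCycle} does not leave the wrap-around to the geometric realization: the move operation is \emph{defined} by $+(kg,k'g)=h_{(i+k+k')[n]}(0)$, with the index reduced mod $n$ inside the algebra. So if group elements are identified with vertexes of the cycle, as Definition~\ref{def:cellComplexFreeGroup} does by writing $v\assign\sum k_ig_j$ with $v\in E$, then $ng$ and $0g$ name the same element and the relation $ng=0g$ is genuinely present; the resulting group is cyclic of order $n$, not free. Your claim that ``the generator $g$ satisfies no nontrivial relation in the presentation'' therefore contradicts the definition as written rather than reinterpreting it. The way to close the gap is to be explicit that the presentation is a surjection $F(\{g\})\cong\mathbb{Z}\to\cyc E$ from the free group on one generator onto the vertex set, with the mod-$n$ identification living in the map (its kernel is generated by $ng$), not in the source group; the lemma then asserts that the $1$-cycle \emph{is presented by} a free group, not that the vertex set \emph{is} one. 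With that one sentence made precise, your argument goes through; without it, the freeness claim does not follow from the definitions you invoke.
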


\begin{theorem}\label{theorem:pathTriangulationGroup}
Every path triangulation of a cell complex has a free group presentation.
\end{theorem}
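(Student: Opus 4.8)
The plan is to reduce the statement directly to the two results the excerpt has already assembled: Theorem~\ref{thm:pathTriangleRealization}, which says every path triangulation has a geometric realization as a collection of 1-cycles, and Lemma~\ref{lemma:1cycleFreeGroup}, which says every 1-cycle in a CW space has a free group presentation. First I would fix a path triangulation $h\bigtriangleup^n K$ of a cell complex $K$ and pass to its geometric realization. By Theorem~\ref{thm:pathTriangleRealization}, this realization is a collection $\{\cyc E_1,\dots,\cyc E_m\}$ of 1-cycles, one for (the boundary of) each path triangle in the triangulation.

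Next I would apply Lemma~\ref{lemma:1cycleFreeGroup} cycle-by-cycle: each $\cyc E_j$ has a free group presentation $f(\mathcal{B}_j,\bigtriangleup_j)\mapsto G(\mathcal{B}_j,+)$ in the sense of Definition~\ref{def:cellComplexFreeGroup}, with generating set $\mathcal{B}_j$ drawn from the $0$-cells $h_i(0)$ on that cycle and relations given by the move operations $+(v,v')=kg+k'g$ recorded in Definition~\ref{def:filledHomotopicCycle}. So at this point every individual triangle already carries a free group presentation.

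The remaining task — and the step I expect to be the main obstacle — is to assemble these local presentations into a single free group presentation of the whole triangulation rather than just a family of presentations of its pieces. Here I would lean on Lemma~\ref{lemma:pathConnected}, which guarantees that every pair of vertexes in $h\bigtriangleup^n K$ is path-connected, and on Theorem~\ref{thm:pathTriangulationCycles}, which exhibits the triangulation as a collection of overlapping path cycles. Because adjacent path triangles share a common vertex or a common edge, the generating sets $\mathcal{B}_j$ overlap along shared vertexes, so a single selected generator $g$ (the nucleus-style basepoint) can be walked forward $k$ vertexes to any other vertex via the move $v\assign kg=h_{i+k}(0)$ and walked back by $\bar h_{i+k}(0)$, exactly as in Definition~\ref{def:filledHomotopicCycle}. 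This lets me take $\mathcal{B}=\bigcup_j\mathcal{B}_j$ as one basis and form the free group $G(\mathcal{B},+)$ on all vertexes of the triangulation, with $\bigtriangleup$ the combined set of relations.

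Finally I would verify that the map $f(\mathcal{B},\bigtriangleup)\to G(\mathcal{B},+)$ built this way is a presentation in the precise sense of Definition~\ref{RotmanPresentation} and Definition~\ref{def:cellComplexFreeGroup}: it sends each vertex $v$ to its linear combination $\sum_{i,j}k_ig_j$ of basis generators, and path-connectedness ensures every vertex is reachable, hence expressible, so $\mathcal{B}$ genuinely generates. The conclusion is that $h\bigtriangleup^n K$ has a free group presentation $G(\mathcal{B},+)$, as desired. I would keep the write-up short, since the genuine content is imported from Theorem~\ref{thm:pathTriangleRealization} and Lemma~\ref{lemma:1cycleFreeGroup}; the only thing needing care is the gluing of local presentations along shared vertexes, which is where connectivity (Lemma~\ref{lemma:pathConnected}) does the work.
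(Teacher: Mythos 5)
Your proposal follows essentially the same route as the paper: decompose the triangulation into (path) cycles, apply Lemma~\ref{lemma:1cycleFreeGroup} to each, and invoke Lemma~\ref{lemma:pathConnected} so that a single vertex can serve as generator for the whole triangulation. The only difference is that you spell out the gluing of the local presentations along shared vertexes, a step the paper compresses into one sentence, so your write-up is if anything more careful than the original.
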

\begin{proof}
Let $h\bigtriangleup^n K$ be a path triangulation of a cell complex $K$.  From Theorem~\ref{thm:pathTriangulationCycles},
$h\bigtriangleup^n K$ is a collection of path cycles. From Lemma~\ref{lemma:1cycleFreeGroup}, every path cycle in $h\bigtriangleup^n K$.  Let $v$ a vertex in a path cycle $\cyc E\in h\bigtriangleup^n K$.  From Lemma~\ref{lemma:pathConnected}, every pair of vertexes $v,v'$ in $h\bigtriangleup^n K$ is path-connected.  Hence, $v$ serves as a generator in a basis for a free group presentation of the triangulation.
\end{proof}

\begin{example}
From Theorem~\ref{theorem:pathTriangulationGroup}, vertex $p$ is a generator of a free group presentation of the path triangulation of the Rotman-Vigolo fish in Fig.~\ref{fig:HawaiianFish}, namely, $G(\mathcal{B},+) = G(\left\{p\right\},+)$.  Vertex $p$ is labeled {\bf nucleus}, since $p$ serves as a generator of not only the free group for the entire fish but also for the free group presentation of the cluster of triangles attached to $p$.  This $p$-cell complex is also calle a maximal nucleus complex (MNC), this complex has the highest number of triangle attached to a vertex in the path triangulation of the fish. 
\qquad\textcolor{blue}{\Squaresteel}
\end{example}

In an extension of Whitehead's homotopy system geometric realization theorem, observe that a {\bf homotopy group} $\rho$ that presents a path-connected cell complex $K$ (denoted by $\rho(K)$ is a free group (a Rotman homotopy group~\cite[370]{Rotman1965TheoryOfGroups}that presents $K$, which has a basis--one or more generator vertexes that are initial path vertexes--and a move binary operation). A {\bf path homotopy group} is a free group that presents a path triangulation of a cell complex $K$ (denoted by $h\rho(K)$).  

\begin{definition}\label{def:pathHomotopySystem}{\bf Path Homotopy System}.\\
A {\bf path homotopy system} $\left\{h\rho(K)\right\}$ $E$ (denoted by $h\mathop{\bigtriangleup}\limits^{\rho(K)} E$) is a collection of path homotopy groups that present a collection of path triangulations of a cell complex $K$ and its subcomplexes.
\qquad\textcolor{blue}{\Squaresteel}
\end{definition}

\begin{theorem}\label{thm:pathHsysRealization}{\bf Path Homotopy System Realization}\\
A path homotopy system has a geometric realization.
\end{theorem}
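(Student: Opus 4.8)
The plan is to assemble the target statement from the structural results already proved for individual path triangulations and then glue them across the system. First I would unwind the definitions: by Definition~\ref{def:pathHomotopySystem}, a path homotopy system $h\mathop{\bigtriangleup}\limits^{\rho(K)} E$ is a collection of path homotopy groups $\left\{h\rho(K)\right\}$, each of which (by the discussion preceding Definition~\ref{def:pathHomotopySystem}) is a free group that presents a path triangulation of $K$ or one of its subcomplexes. So the system is indexed by path triangulations, and to realize the system it suffices to realize each constituent path triangulation and check that the realizations are compatible on overlaps.

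The central step reduces to a single invocation of the geometric realization already established for one triangulation. I would argue that for each path homotopy group $h\rho(L)$ presenting a path triangulation of a subcomplex $L\subseteq K$, Theorem~\ref{thm:pathTriangleRealization} (Path Triangulation Geometric Realization) gives a geometric realization of that triangulation as a collection of $1$-cycles, and Theorem~\ref{theorem:pathTriangulationGroup} together with Lemma~\ref{lemma:1cycleFreeGroup} guarantees that the free group presentation is exactly the combinatorial shadow of that realized cycle structure. Concretely, I would take the realization $\abs{h\bigtriangleup^n L}$ furnished by Theorem~\ref{thm:pathTriangleRealization}, and observe that each generator vertex of $h\rho(L)$ sits as a $0$-cell in $\abs{h\bigtriangleup^n L}$ while each move operation $+(p,q)\to kp$ is realized by traversing the corresponding sequence of realized edges (Lemma~\ref{lemma:path}). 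This identifies the free group $h\rho(L)$ with the path structure of a genuine cell complex, which is the desired realization for a single member of the system.

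The gluing step is where the main obstacle lies. The system contains presentations of $K$ and of its subcomplexes simultaneously, so I would need the realizations $\abs{h\bigtriangleup^n L}$ to agree on shared vertices and edges whenever $L\subseteq L'$. The clean way to secure this is through the path-connectedness and nucleus structure: by Lemma~\ref{lemma:pathConnected} every pair of vertices in a path triangulation is path-connected, and by Lemma~\ref{lemma:AlexandrovNerv} and Theorem~\ref{thm:Nerv} each vertex is the nucleus of an Alexandrov--Hopf nerve complex, so the realizations organize into overlapping nerves whose common nuclei pin down the shared $0$- and $1$-cells. I would argue that choosing a fixed generator (nucleus) vertex for the whole complex $K$ forces the subcomplex realizations to be restrictions of the realization of $K$, so the individual $\abs{h\bigtriangleup^n L}$ assemble into one consistent geometric object.

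The hard part will be justifying that this assembly is genuinely a realization \emph{of the system} and not merely of its largest member: one must verify that the dimension stays within the range where Whitehead's realization theorem (quoted in the introduction, $\dim\leq 4$) applies, since a path triangulation of a planar cell complex produces only $0$-, $1$-, and $2$-cells, comfortably below the bound. I would close by noting that, since every constituent free group presentation is realized as a collection of path-connected $1$-cycles carrying attached $2$-cells, and these cohere on their shared nuclei, the collection of realizations is itself the geometric realization of the path homotopy system, extending Whitehead's theorem to the path-triangulation setting. The delicate point to watch is ensuring the compatibility of the move operations across different presentations, which I expect to handle by fixing a common basepoint generator so that all the free-group move operations are computed relative to the same traversal of realized edges.
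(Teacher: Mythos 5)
Your core argument coincides with the paper's: unwind Definition~\ref{def:pathHomotopySystem}, and for each constituent path homotopy group invoke Theorem~\ref{thm:pathTriangleRealization} to realize the underlying path triangulation as a collection of path-connected 1-cycles and Theorem~\ref{theorem:pathTriangulationGroup} to tie the free group presentation to that triangulation; that is exactly the paper's proof, which then simply declares the whole system realized because each member is. Where you go beyond the paper is in your last two paragraphs: the paper makes no attempt to glue the realizations of $K$ and its subcomplexes, does not check compatibility on overlaps, and never verifies the dimension hypothesis of Whitehead's quoted realization theorem. Your gluing step (via Lemma~\ref{lemma:pathConnected} together with the nucleus structure of Lemma~\ref{lemma:AlexandrovNerv} and Theorem~\ref{thm:Nerv}) and your observation that a planar path triangulation produces only 0-, 1-, and 2-cells are extra care that the paper omits; they strengthen rather than alter the argument. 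Be aware, though, that your claim that fixing a common nucleus generator forces each subcomplex realization to be a restriction of the realization of $K$ is itself asserted rather than proved, so that portion of your write-up sits at roughly the same level of rigor as the paper's own one-line conclusion.
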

\begin{proof}
Let $h\mathop{\bigtriangleup}\limits^{\rho(K)} E$ be a path homotopy system and let $h\rho(K)\in E$ be a free group that presents either a path triangulation $h\bigtriangleup^n K$ of a complex $K$ or a subcomplex of $K$.  From Theorem~\ref{thm:pathTriangleRealization}, $h\bigtriangleup^n K$ has a geometric realization a collection of 1-cycles that are path-connected.  From Theorem~\ref{theorem:pathTriangulationGroup}, $h\bigtriangleup K$ has a free group presentation.  Then, $h\rho(K)$ has a geometric realization.  Hence, the homotopy system $h\mathop{\bigtriangleup}\limits^{\rho(K)} E$ has a geometric realization as collection of path triangulations on cell complex $K$ and its subcomplexes.
\end{proof}

\section{Path Triangulation as a Good Cover of a Cell Complex}
An important property of a path triangulation of a cell complex is that it provides a good cover of the complex.

Recall that a {\bf cover} of a space $X$ is a collection of subsets $E\in 2^X$ such that $X=\bigcup E$~\cite[\S 15.9,p. 104
]{Willard1970}. \\
 \vspace*{0.1cm}

\begin{definition}\label{def:goodCover}{\rm ~\cite[\S 4,p. 12]{Tanaka2021TiAgoodCover}}.\\
A cover of a space $X$ is a {\bf good cover}, provided, $X$ 
has a collection of subsets $E\in 2^X$ such that $X=\bigcup E$ and 
$\mathop{\bigcap}\limits_{\mbox{finite}} 
E\neq \emptyset$
is contractible,
 {\em i.e.}, all nonvoid intersections of the finitely many subsets $E\in 2^X$  are  contractible.
\qquad \textcolor{blue}{\Squaresteel}
\end{definition}
\vspace*{0.1cm}

\begin{theorem}\label{thm:pathTriangleGoodCover}{\bf Path Triangulation Good Cover}.\\
Every path triangulation of a cell complex is a good cover.
\end{theorem}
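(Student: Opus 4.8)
The plan is to verify the two defining conditions of a good cover from Definition~\ref{def:goodCover} directly for the collection of path triangles $h\bigtriangleup E$ that make up a path triangulation $h\bigtriangleup^n K$. First I would establish the covering condition $K = \bigcup_{h\bigtriangleup E\in h\bigtriangleup^n K} h\bigtriangleup E$. By Lemma~\ref{lemma:pathBasedTriangulation} a path triangulation is a collection of path triangles, and by Definition~\ref{def:triangulation} a path is introduced between every pair of orphans, so that every $0$-cell, $1$-cell and $2$-cell of $K$ lies in some path triangle. Hence the path triangles form a cover of $K$.

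The substance of the argument is the contractibility of nonvoid finite intersections. I would first treat pairwise intersections. By the definition of a path-based triangulation, any two adjacent path triangles $h\bigtriangleup E$ and $h\bigtriangleup E'$ meet in either a common $0$-cell or a common $1$-cell, while nonadjacent triangles meet in the empty set. A single $0$-cell is a point, and a single $1$-cell (an edge $\abs{h}$, by Lemma~\ref{lemma:path}) is homeomorphic to $I=[0,1]$; both are contractible. Thus every nonvoid pairwise intersection is contractible.

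Next I would promote this to intersections of three or more triangles using the Alexandrov-Hopf nerve structure. By Lemma~\ref{lemma:AlexandrovNerv} and Theorem~\ref{thm:Nerv}, each vertex $v$ of the triangulation is the nucleus of an Alexandrov-Hopf nerve complex, i.e., the cluster of path triangles attached to $v$ has nonvoid intersection equal to $\left\{v\right\}$. Consequently, whenever a finite subfamily of path triangles has nonvoid common intersection, that intersection is a common face of all of them, either a shared vertex (the nucleus $\left\{v\right\}$) or a shared edge, and in either case it is contractible, exactly as in the remark where $\Nrv K = \bigcap_{h\bigtriangleup E} h\bigtriangleup E$ contracts to a nucleus vertex $p$. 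Combining the covering condition with the contractibility of every nonvoid finite intersection then yields the good-cover property.

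The main obstacle I anticipate is the passage from pairwise to higher-order intersections: one must rule out that three or more path triangles could meet in a disconnected or topologically nontrivial set, for instance a pair of disjoint points or a cycle of edges. The path-based triangulation hypothesis, that adjacent triangles share only a single vertex or a single edge, is precisely what forces every nonvoid intersection to be a single common face, and the Alexandrov-Hopf nucleus identifies that face as a contractible cell. Some care is needed to confirm that the shared structure around each nucleus is a single cell rather than several disjoint cells, which is where Lemma~\ref{lemma:AlexandrovNerv} does the essential work.
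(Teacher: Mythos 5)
Your proof is essentially correct within the paper's framework, but it takes a genuinely different route from the paper's own argument, and the difference is worth noting. The paper's proof makes a single global claim: it asserts that the intersection of \emph{all} path triangles in $h\bigtriangleup^n K$ is one vertex, $\Nrv K = \bigcap_{h\bigtriangleup E\in h\bigtriangleup^n K} h\bigtriangleup E = \left\{p\right\}$, and concludes contractibility from that. You instead verify Definition~\ref{def:goodCover} as literally stated: you check the covering condition, then check contractibility of each nonvoid \emph{finite} intersection, handling pairwise intersections first (a shared vertex or shared edge, each contractible) and then promoting to higher-order intersections via the Alexandrov--Hopf nucleus structure of Lemma~\ref{lemma:AlexandrovNerv} and Theorem~\ref{thm:Nerv}. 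Your local, face-by-face decomposition is the more robust of the two: the paper's global assertion fails for any triangulation containing two triangles with empty intersection (already visible in Fig.~\ref{fig:bornology0}, where the fish-body cluster around $p$ and the tail cluster around $q$ share no common point across all triangles), whereas the good-cover definition never requires the total intersection to be nonvoid. What the paper's approach buys is brevity and a direct link to the nerve $\Nrv K$; what yours buys is an argument that actually covers the general case and explicitly confronts the one real obstacle, namely ruling out disconnected or non-cellular higher-order intersections, which you correctly pin on the requirement that adjacent path triangles share only a single vertex or a single edge.
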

\begin{proof}
Here, we consider only good covers resulting from the trianglulation of finite planar cell complexes. Let $h\bigtriangleup^n K$ (briefly, $K$) be a path triangulation of a planar cell complex $K$. From Def.~\ref{def:triangulation},$K$ is a collection of path triangles $h\bigtriangleup E\in K$.  In addition, every vertex in $K$ is attached to one or more path triangles. Then we know that the intersection of all of the path triangles is a vertex $v\in K$, {\em i.e.}, we have
\begin{align*}
K &= \mathop{\bigcup}\limits_{h\bigtriangleup E\in h\bigtriangleup^n K}h\bigtriangleup E.\\
\mbox{nerve}\ \Nrv K &= \mathop{\bigcap}\limits_{h\bigtriangleup E\in h\bigtriangleup^n K}h\bigtriangleup E = \left\{p\right\}.
\end{align*} 
Hence, $\Nrv K$ contracts to vertex $p$ and $h\bigtriangleup E$ is a good cover.
\end{proof}

\section{Conjectures}
This section briefly introduces conjectures, which lead to a transition from the results in this paper to closely related results in digital topology by G. Lupton and N.A. Scoville~\cite{Lupton2019finitelyPresentedGroups}, {\em e.g.},

\begin{theorem}\label{thm:LuptonScoville2019}{\rm Theorem 5.16~\cite[p.22]{Lupton2019finitelyPresentedGroups}}.\\
Every finitely presented group occurs as the (digital) fundatmental group of some digital image.
\end{theorem}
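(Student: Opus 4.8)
The plan is to reduce the statement to the classical fact that every finitely presented group is the fundamental group of a finite $2$-dimensional cell complex, and then to transport that complex into the digital category using the path-triangulation apparatus developed above together with a digital approximation step. Write $G = \langle x_1,\dots,x_n \mid r_1,\dots,r_m\rangle$. The essential observation is that the free-group presentations produced in Theorem~\ref{theorem:pathTriangulationGroup} already realize the free group $F_n$ on $n$ generators as the fundamental group of a bouquet of $n$ path cycles (each generator a $1$-cycle in the sense of Def.~\ref{def:filledCycle}); since a general finitely presented group is not free, the genuinely new work is to impose the relators, which is precisely the operation of filling selected loops by $2$-cells.

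First I would build the $1$-skeleton. Using the construction behind Theorem~\ref{theorem:pathTriangulationGroup}, realize a wedge of $n$ path cycles sharing a single nucleus vertex $v_0$, so that its path homotopy group $h\rho$ is the free group $F_n = G(\{x_1,\dots,x_n\},+)$ of Def.~\ref{def:cellComplexFreeGroup}. Next, for each relator $r_j = x_{i_1}^{\varepsilon_1}\cdots x_{i_k}^{\varepsilon_k}$ I would attach a filled path triangle (disk) $h\bigtriangleup E_j$ whose boundary $1$-cycle, read off from $v_0$, spells the word $r_j$; filling (coning) that boundary loop by a $2$-cell, exactly as in the collapse mechanism of Prop.~\ref{prop:hTriangle}, makes the loop null-homotopic. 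After attaching all $m$ cells one obtains a finite cell complex with fillings $K_G$, and a Seifert--van Kampen computation gives $\pi_1(|K_G|) \cong F_n/\langle\langle r_1,\dots,r_m\rangle\rangle \cong G$.

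Finally I would pass to the digital category. Embed $|K_G|$ in some $\mathbb{Z}^N$ and take a sufficiently fine cubical digitization $X$ with an appropriate adjacency $\kappa$, then invoke a digital approximation (digital analogue of simplicial approximation / van Kampen) theorem to conclude that the digital fundamental group of $(X,\kappa)$ is isomorphic to $\pi_1(|K_G|)\cong G$. The main obstacle is precisely this last comparison: digital homotopy is far more rigid than continuous homotopy, so one must choose the resolution fine enough that (i) none of the generating digital loops collapses spuriously and (ii) every classical null-homotopy of a relator loop is witnessed by an honest digital homotopy across a digital disk. Controlling both the injectivity and the surjectivity of the comparison map $\pi_1^{\kappa}(X)\to\pi_1(|K_G|)$ --- that is, showing the digitization introduces no extra relations and loses none of the intended ones --- is the delicate step, and it is here that the machinery of \cite{Lupton2019finitelyPresentedGroups} carries the argument.
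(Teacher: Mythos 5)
There is a mismatch at the outset: Theorem~\ref{thm:LuptonScoville2019} is not proved in this paper at all. It is quoted verbatim as Theorem 5.16 of Lupton and Scoville~\cite{Lupton2019finitelyPresentedGroups} and used only as motivation for the two conjectures that follow, so there is no in-paper proof to compare yours against; the only legitimate ``proof'' here is the one in the cited source. Your outline is a reasonable reconstruction of the classical strategy (presentation $2$-complex, then digitization), but as a proof it has a genuine gap precisely where you say the difficulty lies: the comparison isomorphism $\pi_1^{\kappa}(X)\to\pi_1(|K_G|)$ is asserted by appeal to ``the machinery of~\cite{Lupton2019finitelyPresentedGroups}.'' That machinery \emph{is} the theorem being proved --- Lupton and Scoville's contribution is exactly the construction of a digital image from a presentation together with the verification that no relations are gained or lost --- so invoking it makes the argument circular. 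There is no off-the-shelf digital simplicial approximation theorem that guarantees a sufficiently fine cubical digitization of an arbitrary finite $2$-complex has the correct digital fundamental group; digital homotopy is rigid enough that this must be established by hand, and that verification is the entire content of the result.

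A secondary problem is your use of this paper's apparatus for the $1$-skeleton. The ``free group presentation'' of Definition~\ref{def:cellComplexFreeGroup} and Theorem~\ref{theorem:pathTriangulationGroup} is a group generated by vertices under a move operation along paths; nothing in the paper identifies it with the fundamental group of a wedge of circles, and indeed Lemma~\ref{lemma:pathConnected} forces every pair of vertices in a path triangulation to be path-connected, which is a statement about $\pi_0$-type connectivity, not about loops up to based homotopy. So the claim that the wedge of $n$ path cycles has path homotopy group $F_n$ in the sense needed for a Seifert--van Kampen computation is unsupported by anything proved here. If you want to pursue this, the honest route is to work directly in the digital category from the start, as Lupton and Scoville do, rather than building a continuous model and hoping to transport it.
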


\begin{conjecture}
Every path triangulation has a geometric realization in some  triangulated digital image.
\end{conjecture}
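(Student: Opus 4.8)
The plan is to chain together the free group presentation of a path triangulation with the Lupton--Scoville realization theorem, and then to transfer the geometric realization as 1-cycles across into the digital setting. First I would take a path triangulation $h\bigtriangleup^n K$ of a finite planar cell complex $K$. By Theorem~\ref{theorem:pathTriangulationGroup}, $h\bigtriangleup^n K$ carries a free group presentation $G(\mathcal{B},+)$, whose basis consists of generator vertexes and whose move binary operation encodes the path structure. Since $K$ is finite, $\mathcal{B}$ is finite, so $G(\mathcal{B},+)$ is a finitely generated free group and hence \emph{finitely presented} (a free group admits a presentation with generators and no relations).

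Next I would invoke Theorem~\ref{thm:LuptonScoville2019}: the finitely presented group $G(\mathcal{B},+)$ occurs as the digital fundamental group of some digital image $J$. The goal is then to exhibit $J$ as a \emph{triangulated} digital image in which the original path triangulation is realized. For the transport of structure, recall that by Theorem~\ref{thm:pathTriangleRealization} the triangulation $h\bigtriangleup^n K$ has a geometric realization as a collection of path-connected 1-cycles $\abs{h\bigtriangleup E}$. I would build a triangulation of $J$ by placing a vertex at each digital point and joining digitally adjacent points by edges, arranging matters so that the 1-cycles of $\abs{h\bigtriangleup^n K}$ correspond to digital simple closed curves in $J$; because both sides present the same free group $G(\mathcal{B},+)$, the generator vertexes of the path triangulation match the generators of the digital fundamental group of $J$.

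The main obstacle will be this last step: making the correspondence between the continuous 1-cycles and the digital simple closed curves precise, and verifying that the triangulation one builds on $J$ is faithful, in the sense that its digital fundamental group and its induced free group presentation both agree with $G(\mathcal{B},+)$. Theorem~\ref{thm:LuptonScoville2019} guarantees only the \emph{existence} of a digital image with the prescribed fundamental group; it says nothing about a compatible triangulation, so the real work lies in constructing such a triangulation and checking that the move operation on generators survives the passage to digital adjacency. I expect this to require a digital analogue of Theorem~\ref{prop:1cycle}, establishing that each digital simple closed curve is the geometric realization of a path triangle; once that is in hand, the good-cover property (Theorem~\ref{thm:pathTriangleGoodCover}) and the path homotopy system realization (Theorem~\ref{thm:pathHsysRealization}) should carry over to the triangulated image $J$ and close the argument.
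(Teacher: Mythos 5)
The paper offers no proof of this statement: it appears in the final section explicitly labelled a conjecture, so there is nothing on the paper's side to compare your argument against. What can be assessed is whether your proposed route could in principle close the conjecture, and here there is a genuine gap that is more serious than the ``main obstacle'' you identify. Your strategy passes through the free group presentation $G(\mathcal{B},+)$ of Theorem~\ref{theorem:pathTriangulationGroup} and then applies Theorem~\ref{thm:LuptonScoville2019} to obtain a digital image $J$ with that fundamental group. But the fundamental group is a drastically lossy invariant of the triangulation: in the paper's own worked example the presentation of the entire Rotman--Vigolo fish is $G(\{p\},+)$, generated by a single vertex, and wildly different path triangulations present isomorphic free groups. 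Consequently the digital image $J$ produced by Lupton--Scoville carries no trace of the triangles, the $1$-cycles, the number of vertexes, or the adjacency structure of $h\bigtriangleup^n K$; it is only guaranteed to have the right group. The ``transport of structure'' step you defer to the end is therefore not unfinished work along a viable path --- it cannot be completed from the data the first two steps supply, because those steps have already discarded everything the conjecture asks you to realize.

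A route with a better chance of succeeding would bypass the group entirely: start from the geometric realization $\abs{h\bigtriangleup^n K}$ as a collection of path-connected $1$-cycles (Theorem~\ref{thm:pathTriangleRealization}), digitize it directly by sampling the $0$-cells onto an integer grid fine enough that distinct vertexes remain distinct, realize each edge $\abs{h}$ as a digital arc between the corresponding grid points under a chosen adjacency, and then verify that the resulting digital simple closed curves are the images of the path triangles. The group-theoretic statement would then be a corollary of the construction rather than its engine. If you want to keep Lupton--Scoville in the picture, it can serve as evidence that the digital category is rich enough to host the fundamental group you end up with, but it cannot be the mechanism that produces the triangulated image.
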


\begin{conjecture}
Every finitely presented path triangulation free group occurs as a free group presentation of some triangulated digital image.
\end{conjecture}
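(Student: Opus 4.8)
The plan is to compose the two realization theorems already available into a single bridge from algebra to triangulated digital images. Let $G$ be a finitely presented path triangulation free group; by definition $G = h\rho(K)$ arises, via Theorem~\ref{theorem:pathTriangulationGroup}, as the free group presentation of some path triangulation $h\bigtriangleup^n K$ of a cell complex $K$, on a finite basis $\mathcal{B}$ of initial path vertexes. Since $G$ is finitely presented, the first step is to invoke the Lupton--Scoville realization result, Theorem~\ref{thm:LuptonScoville2019}: there exists a digital image $X$ whose digital fundamental group $\pi^{\mathrm{dig}}_1(X)$ is isomorphic to $G$. This produces a digital image carrying $G$ as a homotopy invariant, and the remaining work is to convert that homotopy-theoretic occurrence into a genuine free group presentation attached to a triangulation of $X$.

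Second, I would triangulate $X$. Treating the $0$-cells of $X$ as the distinguished orphan vertexes, I apply Definition~\ref{def:triangulation} to insert a path between each neighboring pair of orphans, producing a path triangulation $h\bigtriangleup^n X$. By Lemma~\ref{lemma:pathConnected} every pair of vertexes in $h\bigtriangleup^n X$ is path-connected, by Theorem~\ref{thm:pathTriangulationCycles} the triangulation decomposes into path cycles, and by Theorem~\ref{theorem:pathTriangulationGroup} it therefore carries its own free group presentation $h\rho(X)$. Fixing a generator vertex $p$, a nucleus in the sense of Lemma~\ref{lemma:AlexandrovNerv}, pins down a concrete basis together with the move operation $+$ of Definition~\ref{def:filledHomotopicCycle}, so that $h\rho(X)$ is explicitly the free group presentation of a triangulated digital image.

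The crux is the third step: proving $h\rho(X)\cong G$, i.e. that the combinatorially defined path triangulation free group of $X$ recovers the group we started from rather than some unrelated free group. Here I would exploit the good cover property. By Theorem~\ref{thm:pathTriangleGoodCover} the triangles of $h\bigtriangleup^n X$ form a good cover whose nerve $\Nrv X$ contracts to a vertex, so a nerve argument identifies the homotopy type of the geometric realization $\abs{h\bigtriangleup^n X}$ with that of $X$; combined with Theorem~\ref{thm:pathHsysRealization}, the free group $h\rho(X)$ and $\pi^{\mathrm{dig}}_1(X)$ present the same underlying homotopy data. Since both groups are free and finitely generated, matching their ranks, equivalently, counting the independent path cycles that survive the contraction of $\Nrv X$ to $p$, yields the isomorphism $h\rho(X)\cong\pi^{\mathrm{dig}}_1(X)\cong G$, exhibiting $G$ as the free group presentation of the triangulated digital image $h\bigtriangleup^n X$.

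The main obstacle I anticipate is precisely this last comparison. The digital fundamental group and the path triangulation free group are built by different machinery, digital loops up to digital homotopy versus move-operation words on path cycles, and there is no a priori reason their presentations agree; making the nerve-to-homotopy-type passage rigorous in the discrete digital setting is the delicate point, and it is essentially the content of the companion conjecture on geometric realization of a path triangulation inside a triangulated digital image. If that geometric realization is granted, the rank count closes the argument; without it, one must supply a direct dictionary translating digital homotopy classes of loops into the move-operation words of Definition~\ref{def:filledHomotopicCycle}.
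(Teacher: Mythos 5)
You should first note that the paper offers no proof of this statement at all: it is presented as an open conjecture (alongside the companion conjecture on geometric realization in a triangulated digital image), so there is no argument of the paper's to compare yours against, and your proposal must stand entirely on its own. Your first two steps are unobjectionable as constructions: invoking Theorem~\ref{thm:LuptonScoville2019} to obtain a digital image $X$ with $\pi_1^{\mathrm{dig}}(X)\cong G$ is legitimate (the fact that Lupton--Scoville realize \emph{all} finitely presented groups while you only need free ones costs nothing), and Definition~\ref{def:triangulation} together with Theorem~\ref{theorem:pathTriangulationGroup} does equip a triangulation $h\bigtriangleup^n X$ with a free group presentation $h\rho(X)$.

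The genuine gap is your third step, and it is worse than the ``delicate point'' you describe: the identification $h\rho(X)\cong\pi_1^{\mathrm{dig}}(X)$ is not merely unproven, it is unsupported by anything in the paper's framework, because $h\rho(X)$ is not a homotopy invariant of $X$. The free group of Definition~\ref{def:cellComplexFreeGroup} is built from the move operation on vertexes of path cycles; in the paper's own example a single nucleus vertex $p$ generates the presentation of the entire triangulated complex, so the rank of $h\rho(X)$ is governed by the combinatorics of the chosen basis, not by independent loops in $X$. Your proposed nerve argument in fact points the opposite way: by Theorem~\ref{thm:pathTriangleGoodCover} the nerve $\Nrv X$ of the triangulation contracts to a single vertex, which collapses precisely the cycle data your rank count would need to detect, so ``counting the independent path cycles that survive the contraction'' has no definitional content here. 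Moreover, you concede that the nerve-to-homotopy-type passage is ``essentially the content of the companion conjecture,'' which makes the argument circular exactly at its crux: you are assuming a statement of the same strength as the one to be proved. What would actually be required --- and what neither the paper nor your proposal supplies --- is a direct dictionary between digital homotopy classes of loops in $X$ and move-operation words in the sense of Definition~\ref{def:filledHomotopicCycle}, or else a redefinition of $h\rho$ making it functorial in the homotopy type; absent that, the proposal is a plausible research program but not a proof, which is consistent with the paper leaving the statement as a conjecture.
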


\section*{Acknowledgements}
Many thanks to Tane Vergili for her many brilliant observations and suggested improvements in various drafts of this paper.

\bibliographystyle{amsplain}
\bibliography{NSrefs}

\end{document}